\def\ps@pprintTitle{%
 \let\@oddhead\@empty
 \let\@evenhead\@empty
 \def\@oddfoot{}%
 \let\@evenfoot\@oddfoot}
\newcommand\scalemath[2]{\scalebox{#1}{\mbox{\ensuremath{\displaystyle #2}}}}
\newcommand{\ignore}[1]{}
\newcommand{\eqm}{\begin{eqnarray}}
\newcommand{\enm}{\end{eqnarray}}
\newcommand{\eql}[1]{\begin{equation}\label{#1}}
\newcommand{\eqml}[1]{\eql{#1}\begin{array}{rcl}}
\newcommand{\enml}{\end{array}\end{equation}}
\newcommand{\eqmno}{\begin{eqnarray*}}
\newcommand{\enmno}{\end{eqnarray*}}
\def\dsp{\displaystyle}
\newtheorem{theorem}{Theorem}
\newtheorem{remark}{Remark}
\newtheorem{lemma}{Lemma}
\newtheorem{corollary}{Corollary}
\def \bi \begin{itemize}
\def \ei \end{itemize}
\def \dsp \displaystyle
\journal{}
\def\@author#1{\g@addto@macro\elsauthors{\normalsize%
    \def\baselinestretch{1}%
    \upshape\authorsep#1\unskip\textsuperscript{%
      \ifx\@fnmark\@empty\else\unskip\sep\@fnmark\let\sep=,\fi
      \ifx\@corref\@empty\else\unskip\sep\@corref\let\sep=,\fi
      }%
    \def\authorsep{\unskip,\space}%
    \global\let\@fnmark\@empty
    \global\let\@corref\@empty  %% Added
    \global\let\sep\@empty}%
    \@eadauthor={#1}
}
\colorlet{water}{cyan!25} % Define color for the water
\tikzset{
    faucet/.pic={ % Define a 'pic' for the water inlet and outlet (PGF 3.0)
        \fill[water](-0.25,-0.25) rectangle (0.25,0.25);
        \draw[line width=1pt](-0.25,-0.25)--(0.25,-0.25) (-0.25,0.25)--(0.25,0.25);
    },
    faucet2/.pic={ % Define a 'pic' for the water inlet and outlet (PGF 3.0)
        \fill[water](-0.25,-0.25) rectangle (0.25,0.75);
        \draw[line width=1pt](-0.25,-0.25)--(-0.25,0.75) (0.25,-0.25)--(0.25,0.75);
    },    
    faucet3/.pic={ % Define a 'pic' for the water inlet and outlet (PGF 3.0)
        \fill[water](-0.25,-0.25) rectangle (0.25,1.21);
        \fill[water](-0.24,1.21) arc (135:225:0.35);
        \fill (0,1) circle (2pt);
        \draw[line width=1pt](-0.25,-0.25)--(-0.25,0.75) (0.25,-0.25)--(0.25,0.75) (0.25,0.75)--(-0.25,1.21);
        \draw[line width=1pt]  (-0.25,1.21) arc(135:225:0.35);
    },    
    myarrow/.tip={Stealth[scale=1.5]}, % Define a style for the tip of arrow
    surface water/.style= % style for border of water surface
    {decoration={random steps,segment length=1mm,amplitude=0.5mm}, decorate}
}
\begin{document}

\begin{frontmatter}

\title{{\bf Numerical integration rules with improved accuracy close to singularities}}\tnotetext[label1]{Dr. Z. Li has been partially supported by a Simon's grant 633724, the rest of the authors by the project 20928/PI/18 (Proyecto financiado por la Comunidad Aut\'onoma de la Regi\'on de Murcia a trav\'es de la convocatoria de Ayudas a proyectos para el desarrollo de investigaci\'on cient\'ifica y t\'ecnica por grupos competitivos, incluida en el Programa Regional de Fomento de la Investigaci\'on Cient\'ifica y T\'ecnica (Plan de Actuaci\'on 2018) de la Fundaci\'on S\'eneca-Agencia de Ciencia y Tecnolog\'ia de la Regi\'on de Murcia) and by the Spanish national research project PID2019-108336GB-I00.}

\author[UPCT]{Sergio Amat}
\ead{sergio.amat@upct.es}
\author[NCSU]{Zhilin Li}
\ead{zhilin@ncsu.edu}
\author[UPCT]{Juan Ruiz-\'Alvarez\corref{cor1}}
\ead{juan.ruiz@upct.es}
\author[UPCT]{Concepci\'on Solano}
\ead{conchisolalorente@gmail.es}
\author[UPCT]{Juan C. Trillo}
\ead{jc.trillo@upct.es}
\date{Received: date / Accepted: date}

\address[UPCT]{Departamento de Matem\'atica Aplicada y Estad\'istica. Universidad  Polit\'ecnica de Cartagena. Cartagena, Spain.}
\address[NCSU]{Department of Mathematics. North Carolina State University. Raleigh, North Carolina, USA.}

\cortext[cor1]{Corresponding author}
%The correct dates will be entered by the editor.

\begin{abstract}

Sometimes it is necessary to obtain a numerical integration using only discretised data. In some cases, the data contains singularities which position is known but does not coincide with a discretisation point, and the jumps in the function and its derivatives are available at these positions. The motivations of this paper is to use the previous information to obtain numerical quadrature formulas that allow approximating the integral of the discrete data over certain intervals accurately.

This work is devoted to the construction and analysis of a new nonlinear technique that allows to obtain accurate numerical integrations of any order using data that contains singularities, and when the integrand is only known at grid points. The novelty of the technique consists in the inclusion of correction terms with a closed expression that depends on the size of the jumps of the function and its derivatives at the singularities, that are supposed to be known. The addition of these terms allows recovering the accuracy of classical numerical integration formulas even close to the singularities, as these correction terms account for the error that the classical integration formulas commit up to their accuracy at smooth zones. Thus, the correction terms can be added during the integration or as post-processing, which is useful if the main calculation of the integral has been already done using classical formulas. The numerical experiments performed allow us to confirm the theoretical conclusions reached in this paper.
\end{abstract}
\begin{keyword}
Accurate numerical integration formulas\sep adaption to singularities\sep definite integration\sep adapted interpolation
\sep 65D05\sep 65D17\sep 65M06\sep 65N06.
\end{keyword}
\end{frontmatter}
%All acknowledgements should be placed in the back of the paper after Conclusions..

\section{Introduction}

Classical integration formulas, such as the trapezoidal rule, the Simpson's rule, or the Newton-Cotes formulas, are based on the integration of interpolatory polynomials over an interval. The classical problem that arises from using such interpolatory polynomials is the loss of accuracy whenever the original data does not present enough regularity. In this article, we introduce a new method inspired by the IIM \cite{Li}, created as a high-resolution technique for the discretization of elliptic partial differential equations with interfaces. 

The problem of obtaining quadrature rules adapted to the presence of discontinuities in this context can be found in the literature \cite{GRIER2014193, Tornberg2002}, but we have not found many references about the subject. In this article, we pretend to obtain of adapted integration formulas that manage to take into account the presence of discontinuities through the addition of correction terms with closed explicit ex\-pre\-ssions. To find these correction terms, we need to know the position of the singularities plus the jumps in the function and its derivatives at the singularities. We are interested in the cases when the function that is to be integrated is given as discretised data points, and we want to use these data in order to recover an approximation of the integral of the original function. In this case, the new technique can be used as a post-processing that makes explicit use of the position of the singularity and the jumps in the function and its derivatives at the singularity. Only with this information, we can compute the correction terms that allow increasing the accuracy close to the discontinuity. Our aim is to show that, through this new technique, it is possible to reach the maximum theoretical accuracy in terms of the length of the stencil.

The present work is organized as follows: Section \ref{cuentas} describes how to obtain correction terms for the trapezoid rule and Simpson's rule. Section \ref{NC} presents a generalization for Newton-Cotes formulas. Section \ref{CF} presents expressions of the correction terms for commonly used Newton-Cotes Formulas. Section \ref{experimentos} presents some numerical experiments that endorse the theoretical results. %{ Section \ref{application} presents one application of the formulas introduced in the article}. 
Finally, Section \ref{conclusions} presents some conclusions.%, final remarks, and future work.

\section{Obtainment of adapted numerical integration formulas}\label{cuentas}
We consider the space of finite 
sequences $V$ and a uniform partition $X$ of the interval 
$[a, b]$ in $J$ subintervals, %and the set of piecewise continuous functions in the interval $[a,b]$, 
$$X=\{x_i\}^{J}_{i=0}, \quad x_0=a, \quad h=x_i-x_{i-1}, \quad x_J=b.$$% \quad x_i=a+\sum_{j=1}^ih_j.
We will consider a piecewise smooth function $f$ discretized through the point values,
\begin{equation}\label{pv}
f_i=f(x_i), \quad f=\left\{f_i\right\}_{i=0}^{J},
\end{equation}
that, therefore, conserves the information of $f$ only at the $x_i$ nodes. We also assume that discontinuities are placed far enough from each other and that their position is known exactly or can be approximated with enough accuracy. Figure \ref{fig_disc} presents the kind of singularities that we will be dealing with in this work. We will refer to these figures along the article. % with a distance greater than $(r+1)h$, being $h$ the grid spacing and 
From these considerations, we can directly proceed to obtain the correction terms and error formulas for these cases. Let us start with the trapezoidal rule.

%\subsection{Trapezoidal rule}
\subsection{Error formula for the corrected trapezoid rule}\label{sec_trapecio}

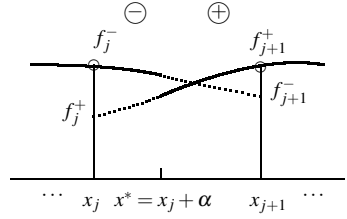
\begin{figure}[ht]
\begin{center}
\resizebox{14cm}{!} {
\begin{picture}(500,90)(-50,-10)
%escala k=2
\put(120,-10){\line(100,0){160}}
%\put(0,-10){\line(0,1){5}}
%\put(80,-10){\line(0,1){5}}
\put(160,-10){\line(0,1){5}}
%\put(200,-10){\line(0,1){5}}%x_{j+1/2}
\put(192,-10){\line(0,1){5}}%x^*
\put(240,-10){\line(0,1){5}}
%\put(320,-10){\line(0,1){5}}
%\put(400,-10){\line(0,1){5}}
%\put(480,-10){\line(0,1){5}}
%celdas escala k=2
\put(135,-20){$\cdots$}
%\put(0,86){$f_{j-2}$}
%\put(80,77){$f_{j-1}$}
\put(160,55){$f^-_{j}$}
\put(235,54){$f^+_{j+1}$}
%\put(315,61){$f_{j+2}$}
%\put(400,186){$f_{j+3}$}
\put(260,-20){$\cdots$}

%\put(-5,-23){$x_{j-2}$}
%\put(75,-23){$x_{j-1}$}
\put(155,-23){$x_{j}$}
%\put(195,-23){$x_{j+1/2}$}
\put(170,-23){$x^*=x_{j}+\alpha$}
\put(235,-23){$x_{j+1}$}
%\put(315,-23){$x_{j+2}$}
%\put(395,-23){$x_{j+3}$}
%\put(475,-23){$x_{j+4}$}

%86  30   6  74  114  

%\put(0,-10){\line(0,1){86}}
%\put(0,76){\circle{5}}
%\put(80,-10){\line(0,1){77}}
%\put(80,67){\circle{5}}
\put(160,-10){\line(0,1){55}}
\put(160,45){\circle{5}}
\put(240,-10){\line(0,1){54}}
\put(240,44){\circle{5}}
%\put(320,-10){\line(0,1){61}}
%\put(320,51){\circle{5}}
%\put(400,-10){\line(0,1){186}}
%\put(400,176){\circle{5}}
\linethickness{0.3mm}
%polinomio izquierda
\qbezier(130,45)(170,45)(192,40)
%prolongacion +
\qbezier[15](160,20)(180,25)(192,30)
\put(145,20){$f^+_{j}$}
%polinomio derecha
\qbezier(192,30)(240,50)(270,45)
%prolongacion -
\qbezier[20](192,40)(210,35)(240,30)
\put(245,30){$f^-_{j+1}$}
%lado menos
\put(180,70){\circle{10}}
\put(176,68){$-$}
%lado mas
\put(220,70){\circle{10}}
\put(216,68){$+$}
\end{picture}
}
\end{center}
\caption{An example of a function with singularities (solid line) placed at a position $x^*$. We have labeled the domain to the left of the singularity as $-$ and the one to the right as $+$. We have also represented with a dashed line the prolongation of the functions through Taylor expansions at both sides of the discontinuity.}\label{fig_disc}
\end{figure}

We can consider the situation presented in Figure \ref{fig_disc}. Let us denote by $E(f)$ the error committed by the classical trapezoidal rule and by $E^*(f)$ the error by the corrected rule.
The classical trapezoid rule for a uniform grid of mesh-size $h$ and its error \cite{atkinson} at smooth zones reads,
\begin{equation}\label{trapecio}
\begin{aligned}
I(f)&=\frac{h}{2}\left(f_{j}+f_{j+1}\right), \\
E(f)&=-\frac{h^3}{12}f''(\eta),\quad \eta\in[x_{j}, x_{j+1}].
 \end{aligned}
\end{equation}
The approximation error is of order $O(h^2)$ if there is a jump in the first derivative in the interval $[x_j, x_{j+1}]$ or $O\left(h\right)$ if there is a jump discontinuity in the function. One way of rising the order of accuracy in the previous cases is to use the location of the singularity $x^*$. Let us suppose that $x^*$ is known exactly. In order to obtain the area below the curve in the interval $[x_j, x^*]$ (the area in the interval $[x^*, x_{j+1}]$ can be obtained in a similar way), we can just use the Taylor expansion of the value $f_{j+1}^+$ around $x^*$ and then change the values from the $+$ side in terms of the $-$ side using the jump relations. %(see for example \cite{IIM_newton} for an explanation on how to approximate the jump relations using one-sided interpolation, in the case that some of them are unknown). 
Let us use the notation,
\begin{equation}\label{ir}
\begin{aligned}%\nonumber
\left[f\right]&=f^{+}(x^{*}) - f^{-}(x^{*}),\\
\left[ f' \right]&=f_x^{+}(x^{*}) - f_x^{-}(x^{*}),\\
\left[ f'' \right]&=f_{xx}^{+}(x^{*}) - f_{xx}^{-}(x^{*}),\\
\left[ f''' \right]&=f_{xxx}^{+}(x^{*}) - f_{xxx}^{-}(x^{*}),\cdots
\end{aligned}
\end{equation}
for the jumps in the function and its derivatives at $x^*$. Then, using Taylor expansions at both sides of the discontinuity, the expressions for $f^-_j, f^+_j, f^-_{j+1}$ and $f^+_{j+1}$ can be written as,
\begin{equation}\label{fjm1_corr}
\begin{aligned}%\nonumber
f^-(x_j)&=f^-_j=f^-(x^*)-f^-_x(x^*) \alpha+O(h^2),\\
f^+(x_j)&=f^+_j=f^+(x^*)-f^+_x(x^*) \alpha+O(h^2),\\
f^-(x_{j+1})&=f^-_{j+1}=f^-(x^*)+f^-_x(x^*) (h-\alpha) +O(h^2),\\
f^+(x_{j+1})&=f^+_{j+1}=f^+(x^*)+f^+_x(x^*) (h-\alpha) +O(h^2),
\end{aligned}
\end{equation}
and subtracting we obtain,
\begin{equation}\label{fjm1_corr2}
\begin{aligned}%\nonumber
f_j^+&=f_j^-+[f]-[f']\alpha+O(h^2),\\
f_{j+1}^+&=f_{j+1}^-+[f]+[f'](h-\alpha)+O(h^2).
\end{aligned}
\end{equation}

Now, let us try to analyze the error formula for the corrected trapezoid rule.
We will use the following lemma, which proof is a classical result and can be found, for example, on page 143 of \cite{atkinson},
\begin{lemma}\label{lema1}
Let $t$ be a real number, different from the nodes $x_0, x_1, \cdots, x_n$. Being $n$ the degree, the polynomial interpolation error to $f(x)$ at $t$ is $f(t)-p_n(t)=(t-x_0)\cdots(t-x_n)f[x_0,\cdots, x_n,t]$, where $f[x_0,\cdots, x_n,t]$ denotes the $(n+1)$-th order divided difference.
\end{lemma}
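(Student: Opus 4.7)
The plan is to give the classical Newton-form derivation. The key idea is to build the polynomial that interpolates $f$ at the \emph{augmented} node set $\{x_0,x_1,\ldots,x_n,t\}$ and exploit the recursive structure of divided differences.

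First, I would write the Newton form of the interpolant $p_n$ at the nodes $x_0,\ldots,x_n$, namely
\begin{equation*}
p_n(x)=\sum_{k=0}^{n} f[x_0,\ldots,x_k]\,\prod_{j=0}^{k-1}(x-x_j),
\end{equation*}
which is a standard consequence of the definition of divided differences and interpolates $f$ at each $x_i$. Because $t$ is assumed distinct from $x_0,\ldots,x_n$, the nodes $x_0,\ldots,x_n,t$ are pairwise distinct, so the interpolant $p_{n+1}$ at these $n+2$ nodes is well defined.

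Next I would invoke the key recursive property of the Newton form: appending one more interpolation node merely adds one more term,
\begin{equation*}
p_{n+1}(x)=p_n(x)+f[x_0,\ldots,x_n,t]\,(x-x_0)(x-x_1)\cdots(x-x_n).
\end{equation*}
Since $p_{n+1}$ interpolates $f$ at $t$ by construction, evaluating both sides at $x=t$ gives $f(t)=p_n(t)+f[x_0,\ldots,x_n,t](t-x_0)\cdots(t-x_n)$, and rearranging yields exactly the claimed formula
\begin{equation*}
f(t)-p_n(t)=(t-x_0)(t-x_1)\cdots(t-x_n)\,f[x_0,\ldots,x_n,t].
\end{equation*}

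The only non-trivial ingredient is the recursive Newton update, which in turn rests on the symmetry of divided differences and the fact that the coefficient of the highest-degree term of the Newton polynomial equals the top-order divided difference. Both facts are standard and are precisely what Atkinson establishes on the pages preceding the citation, so I would treat them as known and let the three-line argument above stand as the proof. No genuine obstacle is expected; the only care required is to note explicitly that the hypothesis $t\neq x_i$ is what guarantees that $p_{n+1}$ and the divided difference $f[x_0,\ldots,x_n,t]$ are well defined.
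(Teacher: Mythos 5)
Your argument is correct: appending $t$ as an extra interpolation node and evaluating the Newton update $p_{n+1}(x)=p_n(x)+f[x_0,\ldots,x_n,t](x-x_0)\cdots(x-x_n)$ at $x=t$ is exactly the classical proof. The paper itself gives no proof and simply cites page 143 of Atkinson, where this same Newton-form derivation appears, so your proposal matches the intended argument.
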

If we denote by $E_{[a,b]}(f)$ the error of integration in the interval $[a,b]$, now we can state the following theorem:

\begin{theorem}\label{teo1}
{ Let $f(x)\in C^2([x_0,\, x^*] \cup [x^*, \, x_n])$ except at a point $x^*\in (x_j,x_{j+1})$. We  denote the function to the left of $x^*$ by $f^-(x)$ and to the right of $x^*$ as $f^+(x)$. If we know the following jumps in the function and its derivatives at $x^*$ and they are finite, $[f]=f^+(x^*)-f^-(x^*), [f']=f'^+(x^*)-f'^-(x^*)$, then} the subtraction of the correction term,
\begin{equation}\label{trapezoid_rule_iim}
C={\frac{\left(-h+2\alpha \right)}{2}} [f]
+{\frac { \left( h{\alpha}-\alpha^{2} \right) }{2}}[f'],
\end{equation}
to the trapezoid numerical integration formula in the interval $[x_j,x_{j+1}]$ that contains the singularity assures that the error is equal to,
\begin{equation}\label{errorteo1}
E^*(f)+C=E_{[x_j,x^*]}(f)+E_{[x^*,x_{j+1}]}(f)+O(h^4),
\end{equation}
with $$E_{[x_j,x^*]}(f)=-\frac{1}{12}\left(\alpha^3 f^-_{xx}(\eta^-)\right)+O(h^4),$$ and $$E_{[x^*,x_{j+1}]}(f)=-\frac{1}{12}\left((h-\alpha)^3f^+_{xx}(\eta^+)\right)+O(h^4),$$ and $\eta^-\in[x_{j}, x^*], \eta^+\in[x^*, x_{j+1}]$.
\end{theorem}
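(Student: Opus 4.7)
The plan is to split the exact integral at $x^{*}$ as $\int_{x_j}^{x_{j+1}}f\,dx=\int_{x_j}^{x^{*}}f^{-}\,dx+\int_{x^{*}}^{x_{j+1}}f^{+}\,dx$ and apply the classical trapezoidal rule separately on each of the two smooth subintervals $[x_j,x^{*}]$ and $[x^{*},x_{j+1}]$. Since each restriction is smooth, the standard error estimate \eqref{trapecio} immediately delivers the two one-sided terms $E_{[x_j,x^{*}]}(f)=-\frac{\alpha^{3}}{12}f^{-}_{xx}(\eta^{-})+O(h^{4})$ and $E_{[x^{*},x_{j+1}]}(f)=-\frac{(h-\alpha)^{3}}{12}f^{+}_{xx}(\eta^{+})+O(h^{4})$ claimed in the statement. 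What remains is to show that the resulting ``ideal split trapezoid'', which uses the inaccessible one-sided limits $f^{-}(x^{*})$ and $f^{+}(x^{*})$, differs from the classical formula $\frac{h}{2}(f_j+f_{j+1})$ by precisely the correction $C$ in \eqref{trapezoid_rule_iim}, up to an $O(h^{4})$ remainder.

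To establish this difference, I would substitute the Taylor expansions \eqref{fjm1_corr}, carried one order further so the remainders are $O(h^{3})$, to replace $f^{-}(x^{*})$ and $f^{+}(x^{*})$ by $f_j$, $f_{j+1}$ and one-sided derivatives at $x^{*}$. Every $+$ derivative is then rewritten as the corresponding $-$ derivative plus the appropriate bracketed jump via \eqref{ir}, so that the whole expression becomes a linear combination of $f_j$, $f_{j+1}$, the smooth quantities $f^{-}_{x}(x^{*}), f^{-}_{xx}(x^{*}),\ldots$, and the jumps $[f],[f'],[f''],\ldots$. Subtracting $\frac{h}{2}(f_j+f_{j+1})$ and collecting, the purely smooth contributions must cancel, because in the no-jump case the identity has to reduce to the classical trapezoid error, which is already captured by $E_{[x_j,x^{*}]}(f)+E_{[x^{*},x_{j+1}]}(f)$ for any $\alpha\in(0,h)$. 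The surviving jump-weighted terms will therefore match $C$, while the higher jumps $[f'']$ and beyond, multiplied by factors of order $\alpha^{3}$ or $(h-\alpha)^{3}$, are either absorbed into $f^{-}_{xx}(\eta^{-})$ and $f^{+}_{xx}(\eta^{+})$ inside the two one-sided errors (using $f^{+}_{xx}(x^{*})=f^{-}_{xx}(x^{*})+[f'']$) or folded into the $O(h^{4})$ tail.

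The main obstacle is the term-by-term bookkeeping that both certifies the cancellation of the smooth contributions and matches the $[f]$ and $[f']$ coefficients with those of $C$. Lemma~\ref{lema1}, which provides a divided-difference form for the Taylor remainders, is the natural tool for verifying that no $h^{3}$ residue beyond what is already inside the two piecewise errors survives. Beyond this, the rest of the proof is a careful but essentially routine algebraic calculation.
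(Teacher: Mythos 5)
Your opening step is sound: the two one-sided expressions $-\frac{\alpha^3}{12}f^-_{xx}(\eta^-)$ and $-\frac{(h-\alpha)^3}{12}f^+_{xx}(\eta^+)$ are exactly the trapezoid errors of the two smooth restrictions, and your expansion does reproduce the $[f]$ and $[f']$ coefficients of $C$ in (\ref{trapezoid_rule_iim}). The genuine gap is the central claim that the classical value $T=\frac{h}{2}(f_j+f_{j+1})$ and the ideal split trapezoid $T_s=\frac{\alpha}{2}\bigl(f_j+f^-(x^*)\bigr)+\frac{h-\alpha}{2}\bigl(f^+(x^*)+f_{j+1}\bigr)$ differ by $C$ up to $O(h^4)$. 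Expanding $f_j$ and $f_{j+1}$ about $x^*$ gives $T-T_s=\frac{2\alpha-h}{2}[f]+\frac{\alpha(h-\alpha)}{2}[f']+\frac{\alpha(h-\alpha)}{4}\bigl(\alpha f^-_{xx}(x^*)+(h-\alpha)f^+_{xx}(x^*)\bigr)+O(h^4)$: beyond $C$ there survives a purely smooth term of size $O(h^3)$, which is neither a jump term, nor absorbable into the $O(h^4)$ tail, nor representable inside $-\frac{\alpha^3}{12}f^-_{xx}(\eta^-)-\frac{(h-\alpha)^3}{12}f^+_{xx}(\eta^+)$ by a choice of $\eta^\pm$ (take $f^-_{xx}$, $f^+_{xx}$ constant to see this). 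Your argument for discarding it --- that in the no-jump case the identity must reduce to the classical trapezoid error, which is ``already captured'' by the two split errors --- rests on a false premise: for smooth $f$ the classical error is $-\frac{h^3}{12}f''(\eta)$, while the split errors sum to $-\frac{\alpha^3+(h-\alpha)^3}{12}f''(\cdot)$, and these differ by $\frac{\alpha(h-\alpha)h}{4}f''=O(h^3)$ for every $\alpha\in(0,h)$. The same issue hits your treatment of $[f'']$: its coefficient in $T-T_s$ is also $O(h^3)$, so it cannot simply be folded into the remainder.

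This is precisely why the paper never introduces a split trapezoid anchored at $x^*$. It keeps the single chord $p$ through the data $(x_j,f_j^-)$, $(x_{j+1},f_{j+1}^+)$, rewrites it on each side of $x^*$ as the chord $p^{\mp}$ through the corresponding smooth extension at the grid nodes plus an explicit jump polynomial (via (\ref{fjm1_corr2})), and integrates that jump polynomial over $[x_j,x^*]$ and $[x^*,x_{j+1}]$ to obtain $C^-$ and $C^+$ as in (\ref{errortrap-})--(\ref{errortrap+}); the $[f'']$ contributions cancel exactly in the sum $C^-+C^+$, and the residual is identified with $\int_{x_j}^{x^*}(f^--p^-)\,dx+\int_{x^*}^{x_{j+1}}(f^+-p^+)\,dx$, i.e.\ one-sided interpolation errors relative to the chords through $x_j,x_{j+1}$, not the split-trapezoid errors at $x^*$ --- and the smooth $O(h^3)$ quantity you lose is exactly the difference between these two residuals. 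To salvage your route you would have to carry that smooth term explicitly (which changes the final error expression, leaving an $O(h^3)$ statement about $T-C$) or switch to the paper's decomposition of $p$; the identity $T-T_s=C+O(h^4)$ on which your plan hinges simply does not hold.
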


\begin{proof}
At the $-$ part of the interval we will denote $$E(f)_{[x_j,x^*]}=\int_{x_{j}}^{x_{j}+\alpha}(f^-(x)-p(x))\ dx,$$
where $p(x)$ is the polynomial of degree 1 taking the values $f_j^-$ and $f^+_{j+1}$ at the interval endpoints $x_j$ and $x_{j+1}$, respectively.
We write this error using the Lagrange's form of the polynomial and take into account that there is a singularity at $x^*=x_j+\alpha$, so we can use the expressions in (\ref{fjm1_corr}),
\begin{equation}\label{p}
\begin{aligned}
p(x)&=\frac{x-x_{j+1}}{x_j-x_{j+1}}f^-_j+\frac{x-x_j}{x_{j+1}-x_j}f^+_{j+1}\\
&=\frac{x-x_{j+1}}{x_j-x_{j+1}}f^-_j+\frac{x-x_j}{x_{j+1}-x_j}f^-_{j+1}+\frac{x-x_j}{x_{j+1}-x_j}\left(\left[f\right]+\left[f'\right] (h-\alpha) +O(h^2)\right)\\
&=p^-(x)+\frac{x-x_j}{x_{j+1}-x_j}\left(\left[f\right]+\left[f'\right] (h-\alpha) +\frac{[f'']}{2}(h-\alpha)^2 +O(h^3)\right).
\end{aligned}
\end{equation}
Then, using (\ref{p}) and denoting by $p^-(x)$ to the piecewise polynomial to the left of the discontinuity, the error can be expressed as,
\begin{equation}
\begin{aligned}
E^{*-}(f)&=\int_{x_{j}}^{x_{j}+\alpha}(f^-(x)-p^-(x)) \ dx=-\frac{1}{12}\alpha^3 f^-_{xx}(\eta^-)\\
&=\int_{x_{j}}^{x_{j}+\alpha} (f^-(x)-p(x))\ dx+\int_{x_{j}}^{x_{j}+\alpha} \frac{x-x_j}{x_{j+1}-x_j}\left(\left[f\right]+\left[f'\right] (h-\alpha)+\frac{\left[f''\right]}{2} (h-\alpha)^2\right)\ dx +O(h^4)\\
&=\int_{x_{j}}^{x_{j}+\alpha} (f^-(x)-p(x))\ dx+\frac {1}{2h}\left(\alpha^{2}[f]+\alpha^2(h-\alpha) [f']+\frac{\alpha^2}{2}(h-\alpha)^2 [f'']\right)+O(h^4)\\
&=E(f)_{[x_j,x^*]}+C^-+\frac{\alpha^2}{4h}(h-\alpha)^2 [f'']+O(h^4),
\end{aligned}
\end{equation}
with $\eta^-\in [x_j, x^*]$, where we have used the error for the classical trapezoid rule. So we have that in the interval $[x_j, x^*]$ the error is,
\begin{equation}\label{errortrap-}
\begin{aligned}
E^{*-}(f)&=E(f)_{[x_j,x^*]}+C^-+\frac{\alpha^2}{4h}(h-\alpha)^2 [f'']+O(h^4)=-\frac{1}{12}\alpha^3 f^-_{xx}(\eta^-),\quad \textrm{with } \eta^-\in [x_j, x^*],\\
C^-&={\frac {{\alpha}^{2}}{2h}}[f]+{\frac {\alpha^2(h-\alpha) }{2h}}[f'].
\end{aligned}
\end{equation}
Replicating the process for the interval $[x^*,x_{j+1}]$, but this time expressing the quantities from the $-$ side in terms of the $+$ side (or just by symmetry), we obtain that,
\begin{equation}\label{errortrap+}
\begin{aligned}
E^{*+}(f)&=E(f)_{[x^*,x_{j+1}]}+C^+-\frac{\alpha^2}{4h}(h-\alpha)^2[f'']+O(h^4)=-\frac{1}{12}(h-\alpha)^3 f^+_{xx}(\eta^+),\quad \textrm{with } \eta^+\in [x^*, x_{j+1}],\\
C^+&=-\left({\frac { \left( h-{\alpha} \right)^2}{2h}} [f]-{\frac { \left( h-{\alpha}
 \right)^2\alpha }{2h}}[f']\right).
\end{aligned}
\end{equation}
Adding the errors obtained in both intervals, as expressed in (\ref{errortrap-}) and (\ref{errortrap+}), it is easy to check that the terms of the error that are $O(h^3)$ disappear and we get, 
$$E^{*}(f)=E^{*-}(f)+E^{*+}(f)=E(f)_{[x^*,x_{j+1}]}+C^-+E(f)_{[x_j,x^*]}+C^+=E(f)_{[x^*,x_{j+1}]}+E(f)_{[x_j,x^*]}+C+O(h^4),$$
where,
$$C=C^++C^-={\frac{\left(-h+2\alpha \right)}{2}} [f]
+{\frac { \left( h{\alpha}-\alpha^{2} \right) }{2}}[f'],$$
that allows us to finish the proof. %{ Error de signo?}
\end{proof}

\subsection{Correction terms and error formula for the corrected Simpson's $\frac{1}{3}$ rule}\label{sec_simpson}

\begin{figure}[ht]
\begin{minipage}{.2\textwidth}
\begin{center}
\resizebox{13cm}{!} {
\begin{picture}(500,120)(100,-20)
%escala k=2
\linethickness{0.3mm}
\put(120,-10){\line(100,0){230}}
%\put(0,-10){\line(0,1){5}}
%\put(80,-10){\line(0,1){5}}
%\put(160,-10){\line(0,1){5}}
%\put(200,-10){\line(0,1){5}}%x_{j+1/2}
\put(192,-10){\line(0,1){5}}%x^*
\put(240,-10){\line(0,1){5}}
\put(320,-10){\line(0,1){5}}
%\put(400,-10){\line(0,1){5}}
%\put(480,-10){\line(0,1){5}}
%celdas escala k=2
\put(120,-20){$\cdots$}
%\put(0,86){$f_{j-2}$}
%\put(80,77){$f_{j-1}$}
\put(160,55){$f_{j-1}^-$}
\put(235,54){$f_{j}^+$}
\put(315,61){$f_{j+1}^+$}
%\put(400,186){$f_{j+3}$}
\put(340,-20){$\cdots$}

%\put(-5,-23){$x_{j-2}$}
%\put(75,-23){$x_{j-1}$}
\put(155,-23){$x_{j-1}$}
%\put(195,-23){$x_{j+1/2}$}
\put(170,3){$x^*=x_{j-1}+\alpha$}
\put(235,-23){$x_{j}$}
\put(315,-23){$x_{j+1}$}
%\put(395,-23){$x_{j+3}$}
%\put(475,-23){$x_{j+4}$}

%86  30   6  74  114  

%\put(0,-10){\line(0,1){86}}
%\put(0,76){\circle{5}}
%\put(80,-10){\line(0,1){77}}
%\put(80,67){\circle{5}}
\put(160,-10){\line(0,1){55}}
\put(160,45){\circle{5}}
\put(240,-10){\line(0,1){54}}
\put(240,44){\circle{5}}
\put(320,-10){\line(0,1){61}}
\put(320,51){\circle{5}}
%\put(400,-10){\line(0,1){186}}
%\put(400,176){\circle{5}}

%polinomio izquierda
\qbezier(130,50)(160,50)(192,40)
%prolongacion -
\qbezier[60](192,40)(250,20)(320,20)
\put(325,17){$f^-_{j+1}$}
\put(240,15){$f^-_{j}$}
%polinomio derecha
\qbezier(192,30)(240,50)(340,50)
%prolongacion +
\qbezier[20](160,20)(180,25)(192,30)
\put(140,20){$f^+_{j-1}$}
%lado menos
\put(180,70){\circle{10}}
\put(176,68){$-$}
%lado mas
\put(220,70){\circle{10}}
\put(216,68){$+$}
\end{picture}
}
%\end{figure}
\end{center}
\end{minipage}
\begin{minipage}{.2\textwidth}
%\begin{figure}
\resizebox{13cm}{!} {
\begin{picture}(500,120)(-150,-20)
%escala k=2
\linethickness{0.3mm}
\put(50,-10){\line(1,0){230}}
%\put(0,-10){\line(0,1){5}}
\put(80,-10){\line(0,1){5}}
\put(160,-10){\line(0,1){5}}
%\put(200,-10){\line(0,1){5}}%x_{j+1/2}
\put(192,-10){\line(0,1){5}}%x^*
\put(240,-10){\line(0,1){5}}
%\put(320,-10){\line(0,1){5}}
%\put(400,-10){\line(0,1){5}}
%\put(480,-10){\line(0,1){5}}
%celdas escala k=2
\put(55,-20){$\cdots$}
%\put(0,86){$f_{j-2}$}
\put(80,77){$f^-_{j-1}$}
\put(160,55){$f^-_{j}$}
\put(235,54){$f^+_{j+1}$}
%\put(315,61){$f_{j+2}$}
%\put(400,186){$f_{j+3}$}
\put(260,-20){$\cdots$}

%\put(-5,-23){$x_{j-2}$}
\put(75,-23){$x_{j-1}$}
\put(155,-23){$x_{j}$}
%\put(195,-23){$x_{j+1/2}$}
\put(170,3){$x^*=x_{j+1}-\alpha$}
\put(235,-23){$x_{j+1}$}
%\put(315,-23){$x_{j+2}$}
%\put(395,-23){$x_{j+3}$}
%\put(475,-23){$x_{j+4}$}

%86  30   6  74  114  

%\put(0,-10){\line(0,1){86}}
%\put(0,76){\circle{5}}
\put(80,-10){\line(0,1){77}}
\put(80,67){\circle{5}}
\put(160,-10){\line(0,1){55}}
\put(160,45){\circle{5}}
\put(240,-10){\line(0,1){54}}
\put(240,44){\circle{5}}
%\put(320,-10){\line(0,1){61}}
%\put(320,51){\circle{5}}
%\put(400,-10){\line(0,1){186}}
%\put(400,176){\circle{5}}

%polinomio izquierda
\qbezier(60,70)(160,50)(192,35)
%prolongacion -
\qbezier[20](192,35)(210,27)(240,20)
\put(245,20){$f^-_{j+1}$}

%polinomio derecha
\qbezier(192,25)(240,45)(270,48)
%prolongacion +
\qbezier[40](80,5)(135,7)(192,25)
\put(140,20){$f^+_{j}$}
\put(60,5){$f^+_{j-1}$}
%lado menos
\put(180,70){\circle{10}}
\put(176,68){$-$}
%lado mas
\put(220,70){\circle{10}}
\put(216,68){$+$}
\end{picture}
}
%\end{center}
\end{minipage}
\caption{Two examples of functions with singularities (solid line) placed in different intervals at a position $x^*$. We have labeled the domain to the left of the singularity as $-$ and the one to the right as $+$. We have also represented with a dashed line the prolongation of the functions through Taylor expansions at both sides of the discontinuity.}\label{fig_disc2}
\end{figure}
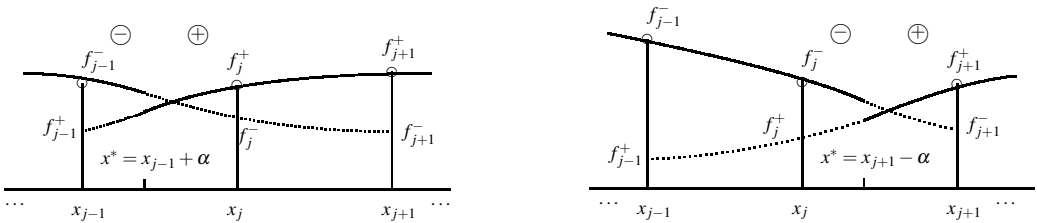

In this section we will proceed to analyze how to adapt Simpon's rule following the same process that we used to adapt the trapezoidal rule in the previous Subsection. Simpson's rule is obtained by inte\-gra\-ting a parabola in the corresponding interval. In this case we need to enlarge the stencil and we will need to use the three data values $(f_{j-1}, f_j, f_{j+1})$, placed at the positions $(x_{j-1}, x_j, x_{j+1})$ in order to build the parabola. In this occasion we must consider two cases: when the discontinuity is in the interval $[x_{j-1}, x_j]$ or in the interval $[x_j, x_{j+1}]$, as shown in the plots of Figure \ref{fig_disc2}. The classical Simpson's $\frac{1}{3}$ rule for a uniform grid of mesh-size $h$ and its error \cite{atkinson} at smooth zones reads,
\begin{equation}\label{simps}
\begin{aligned}
I(f)&=\frac{h}{3}\left(f_{j}+4f_{j+1}+f_{j+2}\right), \\
E(f)&=-\frac{h^5}{90}f^{(4)}(\eta),\quad \eta\in[x_{j}, x_{j+2}].
 \end{aligned}
\end{equation}

Now we can state the following theorem.
\begin{theorem}\label{teo2}
{  Let $f(x)\in C^3([x_0,\, x^*] \cup [x^*, \, x_n])$ except at a point $x^*\in (x_j,x_{j+1})$. We denote the function to the left of $x^*$ by $f^-(x)$ and to the right of $x^*$ as $f^+(x)$. If we know the following jumps in the function and its derivatives at $x^*$ and they are finite,  $[f]=f^+(x^*)-f^-(x^*), [f']=f'^+(x^*)-f'^-(x^*), [f'']=f''^+(x^*)-f''^-(x^*)$, then} the subtraction of the correction term,
\begin{equation}\label{errorteo2.1}
\scalemath{0.9}{
\begin{aligned}
C&=\gamma\left( \alpha-\frac{h}{3} \right) [f]+\frac{\alpha}{6} \left( 3
\alpha-2h \right) [f']+\gamma\frac{{\alpha}^{2}}{6} \left( \alpha-h
 \right) [f''],
\end{aligned}
}
\end{equation}
to the Simpson's numerical integration formula, with $\gamma=1$, if the singularity is placed at an odd interval, and $\gamma=-1$,
if the singularity is placed at an even interval, assures that the error is equal to,
\begin{equation}\label{errorsimps}
\scalemath{0.9}{
\begin{aligned}
%E(f)=E^-(f)+E^+(f)=\frac{1}{90}\left(\alpha^5 f^-_{xx}(\eta^-)+(2h-\alpha)^5f^+_{xx}(\eta^+)\right),\quad \textrm{with } \eta^-\in[x_{j-1}, x^*]\textrm{ and } \eta^+\in[x^*, x_{j+1}].
E(f)+C&=\frac{{\alpha}^{2}}{36} \left( 3{\alpha}^{2}+6{h}
^{2}-8h\alpha \right) [f''']+\frac{f^+_{xxxx}(\eta_1)}{24}\left(\frac{3{h}^{2}{\alpha}^{2}}{2}-h{\alpha}^{3}-\frac{{h}^{4}}{4}\right)+\frac{f^+_{xxx}(\eta_2)}{6}\left(-\frac{{\alpha}^{4}}{4}+\frac{{h}^{2}{\alpha}^{2}}{2}\right)\\
&+\frac{f^-_{xxx}(\eta_3) }{24}\left(\frac{\alpha^{4}}{4}-h{\alpha}^{3}+{h}^{2}{\alpha}^{2}\right)+O(h^5),
\end{aligned}
}
\end{equation}
with $\eta_1\in[x_{j-1}+\alpha,x_{j+1}-\alpha]$. If the discontinuity falls at an odd interval, then $\eta_2\in[x_{j+1}-\alpha,x_{j+1}], \eta_3\in[x_{j-1},x_{j-1}+\alpha]$. If the discontinuity falls at an even interval, the case is symmetric and $\eta_2\in[x_{j-1},x_{j-1}+\alpha], \eta_3\in[x_{j+1}-\alpha,x_{j+1}]$.
\end{theorem}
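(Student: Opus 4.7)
The plan is to mirror the proof of Theorem~\ref{teo1} in the parabolic setting: split the Simpson interval $[x_{j-1},x_{j+1}]$ at the singularity $x^*$, compute on each piece the error committed by the global Simpson parabola, and collect the jump-dependent contributions into closed-form corrections $C^-$ and $C^+$. The two scenarios (singularity in an odd or an even subinterval) are symmetric up to a relabeling of $+$ and $-$, which produces the unifying sign $\gamma=\pm 1$ in \eqref{errorteo2.1}; it suffices to work out the odd case and obtain the even case by symmetry.

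Concretely, I would first write Simpson's interpolant $p(x)$ in Lagrange form through $(x_{j-1},f_{j-1}),(x_j,f_j),(x_{j+1},f_{j+1})$. In the odd case with $x^*=x_{j-1}+\alpha$, on $[x_{j-1},x^*]$ the integrand equals $f^-$, but $p$ reads the $+$-side values $f_j^+$ and $f_{j+1}^+$. Taylor-expanding both at $x^*$ through third order and using the jump relations \eqref{ir} yields
\[
p(x)=p^-(x)+q^-(x)+O(h^3),
\]
where $p^-$ is the parabola through the $-$-extensions $f_{j-1}^-,f_j^-,f_{j+1}^-$ and $q^-(x)$ is an explicit quadratic whose coefficients are products of the Lagrange basis functions with $[f],[f'],[f'']$ and powers of $h,\alpha$. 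The integral of $f^--p^-$ over $[x_{j-1},x^*]$ is controlled via Lemma~\ref{lema1}: it becomes the divided difference $f^-[x_{j-1},x_j,x_{j+1},x]$ times $(x-x_{j-1})(x-x_j)(x-x_{j+1})$, which integrated against $x\in[x_{j-1},x^*]$ produces the $f^-_{xxx}(\eta_3)$ term in \eqref{errorsimps}. The integral of $q^-(x)$ is computed in closed form and defines $C^-$.

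A mirror calculation on $[x^*,x_{j+1}]$, now expanding $f_{j-1}^-$ from the $+$ side, yields the $f^+_{xxxx}(\eta_1)$ and $f^+_{xxx}(\eta_2)$ contributions of \eqref{errorsimps} together with a closed-form $C^+$. Summing $C=C^-+C^+$, the terms of the various orders in each of the jumps $[f],[f'],[f'']$ should rearrange to give exactly \eqref{errorteo2.1}, in the same way the $O(h^3)$ pieces cancelled at the end of the trapezoid argument. The $[f''']$ term in \eqref{errorsimps} with coefficient $\tfrac{\alpha^{2}}{36}(3\alpha^{2}+6h^{2}-8h\alpha)$ appears as the first uncancelled jump contribution of order three---precisely the information the theorem chooses not to absorb into $C$, which uses only the first three jumps.

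The main obstacle is bookkeeping rather than a clever idea. The Taylor expansions must be carried one order further than in Theorem~\ref{teo1} (because Simpson's rule is two orders more accurate than the trapezoid in the smooth case), and the partial-interval errors can no longer be expressed through the clean Simpson remainder $-h^5 f^{(4)}(\eta)/90$, since the Simpson parabola does not interpolate $f^-$ at all three nodes. One must instead invoke Lemma~\ref{lema1} on each sub-interval, produce and keep track of the three distinct intermediate points $\eta_1,\eta_2,\eta_3$, and verify that the coefficients of $[f],[f'],[f'']$ arising from $C^-+C^+$ collapse to the form \eqref{errorteo2.1} without leaving spurious lower-order remainders. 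Executing the even case by symmetry rather than from scratch makes the sign $\gamma$ appear automatically and halves the algebra.
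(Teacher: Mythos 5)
Your overall route is the same as the paper's: split the Simpson interval at $x^*$, use the jump relations \eqref{ir} to rewrite the Lagrange parabola as a one-sided extension plus an explicit jump polynomial, integrate that polynomial in closed form to get $C^-$ and $C^+$, and obtain the even case from the odd one by symmetry, which is exactly where the sign $\gamma$ comes from. The identification of the $[f''']$ coefficient as the first jump contribution deliberately not absorbed into $C$ also matches the paper.

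However, there is a genuine gap at the one step that is not bookkeeping: the error on the sub-interval that contains the interior node $x_j$ (in the odd case, $[x^*,x_{j+1}]$ with $x^*=x_{j-1}+\alpha$). By Lemma \ref{lema1} this error is $\int_{x^*}^{x_{j+1}}(x-x_{j-1})(x-x_j)(x-x_{j+1})\,f^+[x_{j-1},x_j,x_{j+1},x]\,dx$, and the nodal polynomial changes sign at $x_j$, so the integral mean value theorem cannot be applied directly; ``invoking Lemma \ref{lema1} and keeping track of $\eta_1,\eta_2$'' does not by itself produce the two terms $\frac{f^+_{xxxx}(\eta_1)}{24}\bigl(\frac{3h^2\alpha^2}{2}-h\alpha^3-\frac{h^4}{4}\bigr)$ and $\frac{f^+_{xxx}(\eta_2)}{6}\bigl(-\frac{\alpha^4}{4}+\frac{h^2\alpha^2}{2}\bigr)$ of \eqref{errorsimps}. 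The paper's proof supplies the missing mechanism: define $w(x)=\int_{x^*}^{x}(t-x_{j-1})(t-x_j)(t-x_{j+1})\,dt$, observe that $w(x_{j+1}-\alpha)=0$ by symmetry of the nodal polynomial on a uniform grid, split the integral at $x_{j+1}-\alpha$, integrate by parts on $[x^*,x_{j+1}-\alpha]$ using $\frac{d}{dx}f^+[x_{j-1},x_j,x_{j+1},x]=f^+[x_{j-1},x_j,x_{j+1},x,x]$ (which is what raises the derivative order to $f^+_{xxxx}$), and only then apply the mean value theorem on each piece, where $w$ respectively $w'$ has constant sign. Your proposal names the resulting terms but gives no argument that would legitimately produce them, and your closing claim that ``the main obstacle is bookkeeping rather than a clever idea'' understates precisely this device, which is the substantive new ingredient of the Simpson case relative to Theorem \ref{teo1}.
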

\begin{proof}

\begin{itemize}
\item We start by the case when the discontinuity is placed in the interval $[x_{j-1}, x_j]$. 
\begin{enumerate}
\item As in the trapezoidal rule, we know that for the $+$ part of the integral,
 $$E^{*+}(f)=\int_{x_{j-1}+\alpha}^{x_{j+1}}(f^+(x)-p^+(x)) \ dx.$$
The interpolating polynomial $p(x)$ in the Lagrange form is,
\begin{equation}\label{pol3}
\scalemath{0.9}{
\begin{aligned}
p(x)&=\frac{(x-x_{j})(x-x_{j+1})}{(x_{j-1}-x_{j})(x_{j-1}-x_{j+1})}f^-_{j-1}+\frac{(x-x_{j-1})(x-x_{j+1})}{(x_{j}-x_{j-1})(x_{j}-x_{j+1})}f^+_{j}+\frac{(x-x_{j-1})(x-x_{j})}{(x_{j+1}-x_{j-1})(x_{j+1}-x_{j})}f^+_{j+1}.
\end{aligned}
}
\end{equation}
Proceeding in the same way as we did in (\ref{fjm1_corr}) for the trapezoid rule, we can use the expression of $f^-_{j-1}$ in terms of the quantities from the $+$ side to write,
\begin{equation}\label{fjm1_corr3}
\scalemath{0.9}{
\begin{aligned}%\nonumber
f^-_{j-1}%&=-\left[f\right]+f^+(x^*)-\left(-[f']+f^+_x(x^*)\right) \alpha+\left(-[f'']+f^+_{xx}(x^*)\right) \frac{\alpha^2}{2}-\left(-[f''']+f^+_{xxx}(x^*)\right)\frac{\alpha^3}{6}+O(h^4)\\%+\left(-[f^{(iv)}]+f^+_{xxxx}(x^*)\right)\frac{\alpha^4}{24}+O(h^5)\\
&=f^+_{j-1}-\left[f\right]+[f'] \alpha-[f''] \frac{\alpha^2}{2}+[f''']\frac{\alpha^3}{6}+O(h^4).\\%-[f^{(iv)}]\frac{\alpha^4}{24}+O(h^5),\\
%f^+_j&=f^+(x^*)+f^+_x(x^*)(h-\alpha)+f^+_{xx}(x^*)\frac{(h-\alpha)^2}{2}+O(h^3),\\
%f^+_{j+1}&=f^+(x^*)+f^+_x(x^*)(2h-\alpha)+f^+_{xx}(x^*)\frac{(2h-\alpha)^2}{2}+O(h^3),%=\left[f\right]+\Bigg(f^-(x_{j})+f^-_x(x_{j})\cdot \alpha\Bigg)\\
%&+\Bigg(\left[f'\right]+f_x^-(x_{j})+f^-_{xx}(x_{j}) \alpha\Bigg) (h-\alpha)
\end{aligned}
}
\end{equation}
Now we can write,
\begin{equation}
\scalemath{0.9}{
\begin{aligned}
p(x)&=\frac{(x-x_{j})(x-x_{j+1})}{(x_{j-1}-x_{j})(x_{j-1}-x_{j+1})}\left(f^+_{j-1}-\left[f\right]+[f'] \alpha-[f''] \frac{\alpha^2}{2}+[f''']\frac{\alpha^3}{6}\right)%-[f^{(iv)}]\frac{\alpha^4}{24}+O(h^5)\right)\\
+\frac{(x-x_{j-1})(x-x_{j+1})}{(x_{j}-x_{j-1})(x_{j}-x_{j+1})}f^+_{j}\\
&+\frac{(x-x_{j-1})(x-x_{j})}{(x_{j+1}-x_{j-1})(x_{j+1}-x_{j})}f^+_{j+1}\\
&=p^+(x)+\frac{(x-x_{j})(x-x_{j+1})}{(x_{j-1}-x_{j})(x_{j-1}-x_{j+1})}\left(-\left[f\right]+[f'] \alpha-[f''] \frac{\alpha^2}{2}+[f''']\frac{\alpha^3}{6}\right)+O(h^4).\\%-[f^{(iv)}]\frac{\alpha^4}{24}+O(h^5)\right).
\end{aligned}
}
\end{equation}
Then, the error for the integral at the $+$ side in the interval $[x^*, x_{j+1}]$, as shown in Figure \ref{fig_disc} to the left, can be expressed as,
\begin{equation}\label{int_mas0}
\begin{aligned}
E^{*+}(f)&=\int_{x_{j-1}+\alpha}^{x_{j+1}}(f^+(x)-p^+(x)) \ dx=\int_{x_{j-1}+\alpha}^{x_{j+1}}(f^+(x)-p(x)) \ dx\\
&+\int_{x_{j-1}+\alpha}^{x_{j+1}} \frac{(x-x_{j})(x-x_{j+1})}{(x_{j-1}-x_{j})(x_{j-1}-x_{j+1})}\left(-\left[f\right]+[f'] \alpha-[f''] \frac{\alpha^2}{2}+[f''']\frac{\alpha^3}{6}\right)\ dx+O(h^5)\\%-[f^{(iv)}]\frac{\alpha^4}{24}+O(h^5)\right)dx\\
&=%\int_{x_{j-1}+\alpha}^{x_{j+1}} (x-x_{j-1})(x-x_{j})(x-x_{j+1})f^+[x_{j-1}, x_{j}, x_{j+1},x]dx\\
E(f)_{[x^*,x_{j+1}]}\\
&-{\frac {1}{72}}\,{\frac {- \left( 6\,[f]-6\,\alpha\,[f']+3\,{\alpha}^{2}[f'']-\,{\alpha}^{3}[f''']%+{\alpha}^{4}[f^{(iv)}]
 \right) \left( -4\,{h}^{3}+2\,{\alpha}^{3}-9\,h{\alpha}^{2}+12\,{h}^{2}\alpha \right) }{{h}^{2}}}+O(h^5)\\
&=E(f)_{[x^*,x_{j+1}]}+C^++O(h^5),
\end{aligned}
\end{equation}
and we also have that,
\begin{equation}\label{int_mas}
\begin{aligned}
E^{*+}(f)&=\int_{x_{j-1}+\alpha}^{x_{j+1}} (x-x_{j-1})(x-x_{j})(x-x_{j+1})f^+[x_{j-1}, x_{j}, x_{j+1},x]\ dx.
\end{aligned}
\end{equation}
The polynomial in the integrand of (\ref{int_mas}) changes the sign in the interval $(x_{j-1}+\alpha,x_{j+1})$. Thus, we can not use the integral mean value theorem. Instead, we can define the function $$w(x)=\int_{x_{j-1}+\alpha}^{x} (x-x_{j-1})(x-x_{j})(x-x_{j+1})\ dx,$$
that satisfies, $w(x_{j-1}+\alpha)=0$, and $w(x)>0$ for $x\in(x_{j-1}+\alpha,x_{j+1}-\alpha)$ and $w(x)<0$ for $x\in(x_{j+1}-\alpha, x_{j+1})$. Then, we can divide the integral in two parts,
\begin{equation}\label{2partes}
\begin{aligned}
E^{*+}(f)&=\int_{x_{j-1}+\alpha}^{x_{j+1}} w'(x)f^+[x_{j-1}, x_{j}, x_{j+1},x]\ dx=\int_{x_{j-1}+\alpha}^{x_{j+1}-\alpha} w'(x)f^+[x_{j-1}, x_{j}, x_{j+1},x]\ dx\\
&+\int_{x_{j+1}-\alpha}^{x_{j+1}} w'(x)f^+[x_{j-1}, x_{j}, x_{j+1},x]\ dx.
\end{aligned}
\end{equation}
Integrating by parts the first integral,
\begin{equation*}
\begin{aligned}
\int_{x_{j-1}+\alpha}^{x_{j+1}-\alpha} w'(x) f^+[x_{j-1}, x_{j}, x_{j+1},x]\ dx
&=\left[w(x) f^+[x_{j-1}, x_{j}, x_{j+1},x]\right]_{x_{j-1}+\alpha}^{x_{j+1}-\alpha}\\&-\int_{x_{j-1}+\alpha}^{x_{j+1}-\alpha} w(x) \frac{d}{\ dx}f^+[x_{j-1}, x_{j}, x_{j+1},x]\ dx.
\end{aligned}
\end{equation*}
Using now that $w(x_{j+1}-\alpha)=0$, due to the symmetry of the polynomial that appears in the integrand of $w(x)$ in a uniform grid, that (see 3.2.17 page 147 of Atkinson)
\begin{equation}\label{der_difdiv}
\frac{d}{dx}f^+[x_{j-1}, x_{j}, x_{j+1},x]=f^+[x_{j-1}, x_{j}, x_{j+1},x,x],
\end{equation}
 and the integral mean value theorem, we get,
\begin{equation}
\begin{aligned}
&-\int_{x_{j-1}+\alpha}^{x_{j+1}-\alpha} w(x) f^+[x_{j-1}, x_{j}, x_{j+1},x,x]\ dx=-f^+[x_{j-1}, x_{j}, x_{j+1},\xi_1,\xi_1]\int_{x_{j-1}+\alpha}^{x_{j+1}-\alpha} w(x) \ dx=\\
&-f^+[x_{j-1}, x_{j}, x_{j+1},\xi_1,\xi_1]\left(-\frac{3{h}^{2}{\alpha}^{2}}{2}+h{\alpha}^{3}+\frac{{h}^{4}}{4}\right)=\frac{f^{+}_{xxxx}(\eta_1)}{24}\left(\frac{3{h}^{2}{\alpha}^{2}}{2}-h{\alpha}^{3}-\frac{{h}^{4}}{4}\right),
\end{aligned}
\end{equation}
for some $\xi_1, \eta_1\in[x_{j-1}+\alpha,x_{j+1}-\alpha]$. For the second integral in (\ref{2partes}), $w'(x)$ does not change the sign in $[x_{j+1}-\alpha,x_{j+1}]$ so we can apply the integral mean value theorem,
\begin{equation*}
\begin{aligned}
\int_{x_{j+1}-\alpha}^{x_{j+1}} w'(x)f^+[x_{j-1}, x_{j}, x_{j+1},x]\ dx&=f^+[x_{j-1}, x_{j}, x_{j+1},\xi_2]\int_{x_{j+1}-\alpha}^{x_{j+1}} w'(x)\ dx\\
&=\frac{f^+_{xxx}(\eta_2)}{6}\left(-\frac{{\alpha}^{4}}{4}+\frac{{h}^{2}{\alpha}^{2}}{2}\right),
\end{aligned}
\end{equation*}
for some $\xi_2, \eta_2\in[x_{j+1}-\alpha,x_{j+1}]$. Thus,
\begin{equation*}
\begin{aligned}
\int_{x_{j-1}+\alpha}^{x_{j+1}} w'(x)f^+[x_{j-1}, x_{j}, x_{j+1},x]\ dx&=\frac{f^+_{xxxx}(\eta_1)}{24}\left(\frac{3{h}^{2}{\alpha}^{2}}{2}-h{\alpha}^{3}-\frac{{h}^{4}}{4}\right)\\&+\frac{f^+_{xxx}(\eta_2)}{6}\left(-\frac{{\alpha}^{4}}{4}
+\frac{{h}^{2}{\alpha}^{2}}{2}\right).
\end{aligned}
\end{equation*}
So, from (\ref{int_mas0}) we get that the corrected error for the integral in the $+$ side of the left plot of Figure \ref{fig_disc2} is,
\begin{equation}\label{error+}
\begin{aligned}
E(f)_{[x^*,x_{j+1}]}+C^++O(h^5)=E^{*+}(f)&=\frac{f^+_{xxxx}(\eta_1)}{24}\left(\frac{3{h}^{2}{\alpha}^{2}}{2}-h{\alpha}^{3}-\frac{{h}^{4}}{4}\right)\\
&+\frac{f^+_{xxx}(\eta_2)}{6}\left(-\frac{{\alpha}^{4}}{4}+\frac{{h}^{2}{\alpha}^{2}}{2}\right),
\end{aligned}
\end{equation}
with $\xi_2, \eta_2\in[x_{j+1}-\alpha,x_{j+1}]$ and $\xi_1, \eta_1\in[x_{j-1}+\alpha,x_{j+1}-\alpha]$.
\item For the integral in the $-$ side of the left plot of Figure \ref{fig_disc2}, we want to obtain the error $$E^{*-}(f)=\int^{x_{j-1}+\alpha}_{x_{j-1}}(f^-(x)-p^-(x)) \ dx.$$
From (\ref{pol3}) we can express the quantities from the $+$ side in terms of the $-$ side using the jump conditions in (\ref{ir}), as we did before,
\begin{equation}
\scalemath{0.9}{
\begin{aligned}
p(x)&=\frac{(x-x_{j})(x-x_{j+1})}{(x_{j-1}-x_{j})(x_{j-1}-x_{j+1})}f^-_{j-1}\\
&+\frac{(x-x_{j-1})(x-x_{j+1})}{(x_{j}-x_{j-1})(x_{j}-x_{j+1})}\left(f^-_{j}+\left[f\right]+[f'] (h-\alpha)+[f''] \frac{(h-\alpha)^2}{2}+[f''']\frac{(h-\alpha)^3}{6}\right)\\%+[f^{(iv)}]\frac{(h-\alpha)^4}{24}+O(h^5)\right)\\
&+\frac{(x-x_{j-1})(x-x_{j})}{(x_{j+1}-x_{j-1})(x_{j+1}-x_{j})}\left(f^-_{j+1}+\left[f\right]+[f'] (h+\alpha)+[f''] \frac{(h+\alpha)^2}{2}+[f''']\frac{(h+\alpha)^3}{6}\right)+O(h^4)\\%+[f^{(iv)}]\frac{(h+\alpha)^4}{24}+O(h^5)\right)\\
&=p^-(x)+\frac{(x-x_{j-1})(x-x_{j+1})}{(x_{j}-x_{j-1})(x_{j}-x_{j+1})}\left(\left[f\right]+[f'] (h-\alpha)+[f''] \frac{(h-\alpha)^2}{2}+[f''']\frac{(h-\alpha)^3}{6}\right)\\%+[f^{(iv)}]\frac{(h-\alpha)^4}{24}+O(h^5)\right)\\
&+\frac{(x-x_{j-1})(x-x_{j})}{(x_{j+1}-x_{j-1})(x_{j+1}-x_{j})}\left(\left[f\right]+[f'] (h+\alpha)+[f''] \frac{(h+\alpha)^2}{2}+[f''']\frac{(h+\alpha)^3}{6}\right)+O(h^4).%+[f^{(iv)}]\frac{(h+\alpha)^4}{24}+O(h^5)\right).
\end{aligned}
}
\end{equation}

Now, the error for the integral on the $-$ side, as shown in Figure \ref{fig_disc2} to the left, can be expressed as,
\begin{equation}\label{int_menos}
\scalemath{0.85}{
\begin{aligned}
E^-(f)&=\int_{x_{j-1}}^{x_{j-1}+\alpha}(f^-(x)-p^-(x)) \ dx=\int_{x_{j-1}}^{x_{j-1}+\alpha}(f^-(x)-p(x)) \ dx\\
&+\int_{x_{j-1}}^{x_{j-1}+\alpha}\frac{(x-x_{j-1})(x-x_{j+1})}{(x_{j}-x_{j-1})(x_{j}-x_{j+1})}\left(\left[f\right]+[f'] (h-\alpha)+[f''] \frac{(h-\alpha)^2}{2}+[f''']\frac{(h-\alpha)^3}{6}\right)\ dx\\%+[f^{(iv)}]\frac{(h-\alpha)^4}{24}+O(h^5)\right)dx\\
&+\int_{x_{j-1}}^{x_{j-1}+\alpha}\frac{(x-x_{j-1})(x-x_{j})}{(x_{j+1}-x_{j-1})(x_{j+1}-x_{j})}\left(\left[f\right]+[f'] (2h-\alpha)+[f''] \frac{(2h-\alpha)^2}{2}+[f''']\frac{(2h-\alpha)^3}{6}\right)\ dx+O(h^5)\\%\\+[f^{(iv)}]\frac{(2h-\alpha)^4}{24}+O(h^5)\right)dx\\
&=%\int_{x_{j-1}}^{x_{j-1}+\alpha} (x-x_{j-1})(x-x_{j})(x-x_{j+1})f^-[x_{j-1}, x_{j}, x_{j+1},x]dx\\
E(f)-{\frac {1}{72}}\Bigg({\frac {{\alpha}^{2} \left( -54\,h+12\,\alpha
 \right) [f]}{{h}^{2}}}+\,{\frac {{\alpha}^{2}
 \left( 54\,h\alpha-12\,{\alpha}^{2}-36\,{h}^{2} \right) [f']
}{{h}^{2}}}+\,{\frac {{\alpha}^{2} \left( 6\,{\alpha}
^{3}+24\,{h}^{2}\alpha-27\,h{\alpha}^{2} \right) [f'']}{{h}^{2
}}}\\
&+\,{\frac {{\alpha}^{2} \left( -2\,{\alpha}^{4}+9
\,h{\alpha}^{3}-12\,{h}^{3}\alpha-6\,{h}^{2}{\alpha}^{2}+12\,{h}^{4}
 \right) [f''']}{{h}^{2}}}\Bigg)+O(h^5)\\%+\,{\frac {{\alpha}^{2} \left( 48\,{\alpha}^{2}{h}^{3}+2\,{\alpha}^{5}-76\,\alpha\,{h}^{4}+36\,{h}^{5}-9\,{\alpha}^{4}h \right) [f^{(iv)}]}{{h}^{2}}}\Bigg)\\
&=E(f)_{(x_{j-1},x^*)}+C^-+O(h^5)=\int_{x_{j-1}}^{x_{j-1}+\alpha} (x-x_{j-1})(x-x_{j})(x-x_{j+1})f^-[x_{j-1}, x_{j}, x_{j+1},x]\ dx.
\end{aligned}
}
\end{equation}

It is not difficult to see that the polynomial in the integrand does not change the sign in the interval $(x_{j-1},x_{j-1}+\alpha)$. Thus, using the integral mean value theorem
\begin{equation}
\begin{aligned}
E^{*-}(f)&=\int_{x_{j-1}}^{x_{j-1}+\alpha} (x-x_{j-1})(x-x_{j})(x-x_{j+1})f^-[x_{j-1}, x_{j}, x_{j+1},x]\ dx\\
&=f^-[x_{j-1}, x_{j}, x_{j+1},\xi_3]\int_{x_{j-1}}^{x_{j-1}+\alpha} (x-x_{j-1})(x-x_{j})(x-x_{j+1})\ dx\\
&=\frac{f^-_{xxx}(\eta_3) }{24}\left(\frac{\alpha^{4}}{4}-h{\alpha}^{3}+{h}^{2}{\alpha}^{2}\right),
\end{aligned}
\end{equation}
for some $\xi_3, \eta_3\in[x_{j-1},x_{{j-1}}+\alpha]$.
So, from (\ref{int_menos}) we get that the corrected error for the left part of the integral is,
\begin{equation}\label{error-}
E(f)_{(x_{j-1},x^*)}+C^-+O(h^5)=E^{*-}(f)=\frac{f^-_{xxx}(\eta_3) }{24}\left(\frac{\alpha^{4}}{4}-h{\alpha}^{3}+{h}^{2}{\alpha}^{2}\right),
\end{equation}
for some $\xi_3, \eta_3\in[x_{j+1}-\alpha,x_{j+1}]$.
\end{enumerate}

Adding the error terms $C^+$ and $C^-$ obtained in (\ref{int_mas0}) and (\ref{int_menos}), we obtain 
\begin{equation}\label{cterm0}
\scalemath{0.9}{
\begin{aligned}
C^++C^-&=-\left(-\left( \alpha-\frac{h}{3} \right) [f]+\frac{\alpha}{6} \left( 3
\alpha-2h \right) [f']-\frac{{\alpha}^{2}}{6} \left( \alpha-h
 \right) [f'']+\frac{{\alpha}^{2}}{36} \left( 3{\alpha}^{2}+6{h}
^{2}-8h\alpha \right) [f''']\right).
\end{aligned}
}
\end{equation}
Let us denote the terms up to $O(h^3)$ by,
\begin{equation}\label{cterm}
\scalemath{0.9}{
\begin{aligned}
C&=-\left(-\left( \alpha-\frac{h}{3} \right) [f]+\frac{\alpha}{6} \left( 3
\alpha-2h \right) [f']-\frac{{\alpha}^{2}}{6} \left( \alpha-h
 \right) [f'']\right).
\end{aligned}
}
\end{equation}

Adding now the errors in the intervals $[x^*, x_{j+1}]$ and $[x_{j-1},x^*]$ as expressed respectively in (\ref{error+}) and (\ref{error-}), and denoting again $$E(f)=E(f)_{[x_{j-1},x^*]}+E(f)_{[x^*,x_{j+1}]},$$ we obtain from (\ref{cterm}) and (\ref{cterm0}),
\begin{equation}\label{errort}
\scalemath{0.85}{
\begin{aligned}
E^*(f)=E^{*+}(f)+E^{*-}(f)=E(f)+C+O(h^5)&=\frac{{\alpha}^{2}}{36} \left( 3{\alpha}^{2}+6{h}
^{2}-8h\alpha \right) [f''']+\frac{f^+_{xxxx}(\eta_1)}{24}\left(\frac{3{h}^{2}{\alpha}^{2}}{2}-h{\alpha}^{3}-\frac{{h}^{4}}{4}\right)\\
&+\frac{f^+_{xxx}(\eta_2)}{6}\left(-\frac{{\alpha}^{4}}{4}+\frac{{h}^{2}{\alpha}^{2}}{2}\right)+\frac{f^-_{xxx}(\eta_3) }{24}\left(\frac{\alpha^{4}}{4}-h{\alpha}^{3}+{h}^{2}{\alpha}^{2}\right),
%\\
%&\eta_1\in[x_{j-1}+\alpha,x_{j+1}-\alpha], \eta_2\in[x_{j+1}-\alpha,x_{j+1}], \eta_3\in[x_{j-1},x_{j-1}+\alpha].
\end{aligned}
}
\end{equation}
with $\eta_1\in[x_{j-1}+\alpha,x_{j+1}-\alpha], \eta_2\in[x_{j+1}-\alpha,x_{j+1}], \eta_3\in[x_{j-1},x_{j-1}+\alpha].$
\item If the singularity is placed in the interval $(x_{j}, x_{j+1})$ at a distance $\alpha$ from $x_{j+1}$, that is the case presented in Figure \ref{fig_disc2} to the right, the case is symmetrical and the correction term is:
\begin{equation*}
%\scalemath{0.95}{
\begin{aligned}
C&=-\left(\left( \alpha-\frac{h}{3} \right) [f]+\frac{\alpha}{6} \left( 3
\alpha-2h \right) [f']+\frac{{\alpha}^{2}}{6} \left( \alpha-h
 \right) [f'']\right).
%C&=-{\frac {1}{288}}\,{\frac { \left( -24\,[f]-24\,\alpha\,[f']-12\,{\alpha}^{2}[f'']-4\,{\alpha}^{3}[f'''] \right) \left( -4\,{h}^{3}+2\,{\alpha}^{3}-9\,h{\alpha}^{2}+12\,{h}^{2}\alpha \right) }{{h}^{2}}}\\
%&+{\frac {1}{288}}\Bigg({\frac {-{\alpha}^{2} \left( -216\,h+48\,\alpha
% \right) [f]}{{h}^{2}}}+\,{\frac {{\alpha}^{2}
% \left( 216\,h\alpha-48\,{\alpha}^{2}-144\,{h}^{2} \right) [f']
%}{{h}^{2}}}+\,{\frac {-{\alpha}^{2} \left( 24\,{\alpha}
%^{3}+96\,{h}^{2}\alpha-108\,h{\alpha}^{2} \right) [f'']}{{h}^{2
%}}}\\
%&+\,{\frac {{\alpha}^{2} \left( -8\,{\alpha}^{4}+36
%\,h{\alpha}^{3}-48\,{h}^{3}\alpha-24\,{h}^{2}{\alpha}^{2}+48\,{h}^{4}
% \right) [f''']}{{h}^{2}}}\Bigg),%+\,{\frac {{\alpha}^{
%2} \left( 48\,{\alpha}^{2}{h}^{3}+2\,{\alpha}^{5}-76\,\alpha\,{h}^{4}+
%36\,{h}^{5}-9\,{\alpha}^{4}h \right) [f^{(iv)}]}{{h}^{2}}}\Bigg),
\end{aligned}
%}
\end{equation*}
In this case the error reads,

\begin{equation}\label{errort2}
\scalemath{0.9}{
\begin{aligned}
E^*(f)=E(f)+C+O(h^5)&=\frac{{\alpha}^{2}}{36} \left( 3{\alpha}^{2}+6{h}
^{2}-8h\alpha \right) [f''']+\frac{f^+_{xxxx}(\eta_1)}{24}\left(\frac{3{h}^{2}{\alpha}^{2}}{2}-h{\alpha}^{3}-\frac{{h}^{4}}{4}\right)\\
&+\frac{f^+_{xxx}(\eta_2)}{6}\left(-\frac{{\alpha}^{4}}{4}+\frac{{h}^{2}{\alpha}^{2}}{2}\right)
+\frac{f^-_{xxx}(\eta_3) }{24}\left(\frac{\alpha^{4}}{4}-h{\alpha}^{3}+{h}^{2}{\alpha}^{2}\right),
\end{aligned}
}
\end{equation}

with $\eta_1\in[x_{j-1}+\alpha,x_{j+1}-\alpha], \eta_2\in[x_{j-1},x_{j-1}+\alpha], \eta_3\in[x_{j+1}-\alpha,x_{j+1}]$.
\end{itemize}
%\vspace{-0.5cm}
\end{proof}

\begin{remark}
Theorems \ref{teo1} and \ref{teo2} imply that we can use the classical composite trapezoidal rule or the composite Simpson's rule to obtain the integral over a large interval and, then, add the corres\-ponding correction terms (\ref{trapezoid_rule_iim}) or (\ref{errorteo2.1}) to obtain $O(h^2)$ or $O(h^4)$ global accuracy respectively, if singularities are present in the data. Mind that the correction terms are typically added to take into account the effect of the set of singularities, which cardinal is usually small (one dimension lower) compared with the number of points in the data. Thus, it is enough if the correction terms provide the order of the global error of the classical composite integration rule. %(instead of $O(1)$ accuracy if the data presents a jump in the function or $O(h)$ if there is a jump in the first derivative). 
The integral can be obtained through classical quadrature rules and then add the corrections as post-processing.
\end{remark}

\section{Modified Newton-Cotes integration formulas}\label{NC}

The Trapezoidal rule and the Simpson's $\frac{1}{3}$ formula, which we have analyzed in previous sections, are the first two cases of Newton-Cotes integration formulas. In what follows, we will try to obtain expressions for the errors of corrected integration formulas of any order. To do so, we present some previous lemmas that we will use afterward in the proofs.

\begin{lemma}\label{lema2}
Let $f(x)\in C^{n+1}([a,\, x^*] \cup [x^*, \, b])$ except at a point $x^*\in (a,b)$. 
%Let $f(x)$ be $n+1$ times continuously differentiable in $[a,b]$ except at a point $x^*\in (a,b)$. 
We denote the function to the left of $x^*$ by $f^-(x)$ and to the right of $x^*$ as $f^+(x)$. If we know the following jumps in the function and its derivatives at $x^*$ and they are finite, $[f]=f^+(x^*)-f^-(x^*), [f']=f'^+(x^*)-f'^-(x^*),\cdots, [f^{(n)}]=f^{(n)+}(x^*)-f^{(n)-}(x^*)$, then at any node $x_i$ we can express any value of $f^{+}(x_i)$ in terms of the jumps and the continuous extension of the function from the other side of the discontinuity (see for example, Figures \ref{fig_disc}, \ref{fig_disc2}, \ref{fig_disc3}), that is:
\begin{equation}\label{exp+}
\begin{aligned}
f^+_i&=f^-_i+[f]+[f'](x_i-x^*)+\frac{1}{2}[f''](x_i-x^*)^2+\cdots+\frac{1}{n!}[f^{(n)}](x_i-x^*)^n+O(h^{n+1}).\\
\end{aligned}
\end{equation}
Isolating, we can obtain $f^-_i$ in terms of $f^+_i$.

\end{lemma}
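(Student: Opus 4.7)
The plan is to prove the identity by applying Taylor's theorem separately to the one-sided smooth pieces $f^-$ and $f^+$, each expanded about the singularity $x^*$, and then subtracting. The $C^{n+1}$ regularity of $f^\pm$ on its respective closed half-interval (up to and including $x^*$) is exactly what is needed for a Taylor expansion with remainder of order $n+1$ at $x^*$. The quantity $|x_i - x^*|$ is $O(h)$ under the standing assumption that the grid spacing is $h$ and $x_i$ lies in a stencil of bounded size around $x^*$, so the remainder term can be written as $O(h^{n+1})$.

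First I would make explicit what $f^+(x_i)$ means when $x_i$ lies on the minus side of $x^*$ (and symmetrically). Since $f^+ \in C^{n+1}([x^*, b])$, all one-sided derivatives $f^{(k)+}(x^*)$ exist for $k=0,\ldots,n+1$, and the Taylor polynomial
\[
\tilde f^+(x) \;=\; \sum_{k=0}^{n} \frac{1}{k!}\, f^{(k)+}(x^*)\,(x-x^*)^k
\]
provides the natural smooth extension of $f^+$ to a full neighborhood of $x^*$; this is the "continuous extension" referred to in the statement, and it is what appears in Figures \ref{fig_disc}, \ref{fig_disc2}. The same applies symmetrically to $f^-$.

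Next I would write the two Taylor expansions about $x^*$,
\[
f^+(x_i) \;=\; \sum_{k=0}^{n}\frac{1}{k!} f^{(k)+}(x^*)(x_i-x^*)^k + O(h^{n+1}),
\]
\[
f^-(x_i) \;=\; \sum_{k=0}^{n}\frac{1}{k!} f^{(k)-}(x^*)(x_i-x^*)^k + O(h^{n+1}),
\]
valid by Taylor's theorem with remainder in Lagrange form, where the remainder is controlled uniformly by $\|f^{(n+1)\pm}\|_\infty$ on the relevant side and the hypothesis $|x_i-x^*| \le C h$. Subtracting the two expansions termwise and recognizing the coefficient of $(x_i-x^*)^k$ on the left-hand side as the jump $[f^{(k)}] = f^{(k)+}(x^*) - f^{(k)-}(x^*)$ yields
\[
f^+_i - f^-_i \;=\; [f] + [f'](x_i-x^*) + \tfrac{1}{2}[f''](x_i-x^*)^2 + \cdots + \tfrac{1}{n!}[f^{(n)}](x_i-x^*)^n + O(h^{n+1}),
\]
which is precisely \eqref{exp+}. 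The second assertion, that $f^-_i$ can be recovered from $f^+_i$, is immediate by moving $f^-_i$ to the other side (or equivalently by interchanging the roles of the two sides, which only flips the sign of every jump).

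There is no substantive obstacle here: the only point requiring care is a clean statement that the one-sided regularity $f^\pm \in C^{n+1}$ up to $x^*$ is what legitimizes both the Taylor expansion at $x^*$ from each side and the interpretation of $f^\pm(x_i)$ for $x_i$ on the opposite side of the singularity via the polynomial extension. Once this is made explicit, the identity is a one-line subtraction, and this lemma then serves as the uniform substitution rule used repeatedly in the Newton–Cotes derivations that follow.
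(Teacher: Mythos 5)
Your proposal is correct and follows essentially the same route as the paper, whose proof is simply the observation that the identity is direct from Taylor expansions of $f^-$ and $f^+$ about $x^*$; you merely spell out the subtraction and the role of the one-sided $C^{n+1}$ regularity. The extra care you take in defining the continuous extension of each side is a welcome clarification but does not change the argument.
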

\begin{proof}
The proof is direct using Taylor expansions.
\end{proof}

We denote by $\lfloor x\rfloor$ greatest integer less than or equal to $x$ and $\lceil x\rceil$ the least integer greater than or equal to~$x$.

\begin{lemma}\label{lema3}
We consider an interpolating polynomial of degree $n$ in the Lagrange form in the interval $[a,b]$, constructed using $n+1$ points belonging to a piecewise continuous function that contains a singularity at $x^*\in(a,b)$ and that is $n$ times piecewise continuously differentiable. We follow the same notation as before and denote the information to the left of the singularity with the $-$ symbol and to the right with the $+$ symbol. Then in the interval of interest $[a,b]$:
\begin{itemize}
\item We can express this polynomial as a continuous extension in the $-$ region of the polynomial at the $+$ region, plus additional terms as,
\begin{equation}\label{pol+}
\begin{aligned}
p_n(x)=\sum_{i=0}^{n+1}f^{+}(x_i)\prod_{j=0, j\neq i}^{j=n+1}\dfrac{x-x_j}{x_i-x_j}+Q^-(x)=p^+_n(x)+Q^-(x).
\end{aligned}
\end{equation}
If we denote by,
\begin{equation}\label{tayl}
\begin{aligned}
\tilde{f}_i&=[f]+[f'](x_i-x^*)+\frac{1}{2}[f''](x_i-x^*)^2+\cdots+\frac{1}{n!}[f^{(n)}](x_i-x^*)^n+O(h^{n+1}),
\end{aligned}
\end{equation}
then $Q^-(x)$ contains all the information of the singularity and takes the expression,
\begin{equation}\label{pol-}
\begin{aligned}
Q^-(x)=\sum_{i=0}^{\lfloor \frac{x^*-a}{h}\rfloor}\tilde{f_i} \prod_{j=0, j\neq i}^{j=n+1}\dfrac{x-x_j}{x_i-x_j},\\
\end{aligned}
\end{equation}
\item We can express this polynomial as a continuous extension in the $+$ region of the polynomial at the $-$ region, plus additional terms as,
\begin{equation*}
\begin{aligned}
p_n(x)=\sum_{i=0}^{n+1}f^{-}(x_i)\prod_{j=0, j\neq i}^{j=n+1}\dfrac{x-x_j}{x_i-x_j}+Q^+(x)=p^+_n(x)+Q^+(x).
\end{aligned}
\end{equation*}
In this case $Q^+(x)$ takes the expression,
\begin{equation*}
\begin{aligned}
Q^+(x)=\sum_{i=\lceil \frac{x^*-a}{h}\rceil}^{n+1}\tilde{f_i} \prod_{j=0, j\neq i}^{j=n+1}\dfrac{x-x_j}{x_i-x_j},\\
\end{aligned}
\end{equation*}
\end{itemize}
\end{lemma}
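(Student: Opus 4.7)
The plan is to exhibit the decomposition by starting from the ordinary Lagrange form of $p_n(x)$ written with the actual discrete data and then rewriting each left-side value via the jump formula of Lemma~\ref{lema2}. Introduce the index set $I^-=\{0,1,\dots,\lfloor(x^*-a)/h\rfloor\}$ of nodes lying to the left of $x^*$ and $I^+$ for the remaining indices. By construction of $p_n$,
\[
p_n(x)=\sum_{i\in I^-} f^-(x_i)\prod_{j\neq i}\frac{x-x_j}{x_i-x_j}+\sum_{i\in I^+} f^+(x_i)\prod_{j\neq i}\frac{x-x_j}{x_i-x_j},
\]
since the actual sampled value at $x_i$ is $f^-(x_i)$ on the left of $x^*$ and $f^+(x_i)$ on the right.

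Next I would apply Lemma~\ref{lema2} to each $f^-(x_i)$ with $i\in I^-$. Using the isolated form stated there, $f^-(x_i)$ can be written as $f^+(x_i)$ minus the truncated Taylor combination $\tilde f_i$ defined in (\ref{tayl}) (up to $O(h^{n+1})$). Substituting this relation into the first sum and then merging the $f^+(x_i)$ contributions from $I^-$ with the $f^+(x_i)$ contributions from $I^+$, the full Lagrange sum reorganises into
\[
p_n(x)=\sum_{i=0}^{n} f^+(x_i)\prod_{j\neq i}\frac{x-x_j}{x_i-x_j}\;+\;\sum_{i\in I^-}\tilde f_i\prod_{j\neq i}\frac{x-x_j}{x_i-x_j},
\]
which, upon identifying the first sum with $p_n^+(x)$, is exactly the decomposition (\ref{pol+})--(\ref{pol-}) with $Q^-(x)$ given by the stated formula (the sign being inherited from the convention in Lemma~\ref{lema2}).

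The symmetric case is obtained by the same argument with the roles of ``$+$'' and ``$-$'' swapped: apply Lemma~\ref{lema2} to the right-side values $f^+(x_i)$ for $i\in I^+$ to rewrite them as $f^-(x_i)$ plus the Taylor jump block $\tilde f_i$, and regroup so that the remaining $f^-$-values at all nodes assemble the extension $p_n^-(x)$, while the $\tilde f_i$ pieces for $i\in I^+=\{i:i\geq\lceil(x^*-a)/h\rceil\}$ collect into $Q^+(x)$.

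I do not expect any serious obstacle here, as the lemma is essentially an algebraic rearrangement; the only point that requires care is bookkeeping of the index ranges $I^\pm$ (the floor/ceiling of $(x^*-a)/h$ correctly counts the nodes strictly to the left / right of $x^*$, since $x^*$ never coincides with a grid point by assumption) and the consistent use of the sign convention in Lemma~\ref{lema2}, so that the $\tilde f_i$ appearing inside $Q^-$ and $Q^+$ match the statement verbatim.
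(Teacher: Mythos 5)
Your proposal is correct and follows essentially the same route as the paper's (deliberately terse) proof: write $p_n$ in Lagrange form with the sampled values, replace the values on one side of $x^*$ via the jump relations of Lemma~\ref{lema2}, and regroup the $f^{\pm}$ contributions into $p_n^{\pm}$ with the leftover $\tilde f_i$ terms forming $Q^{\mp}$. One bookkeeping remark: since $f^-_i=f^+_i-\tilde f_i$, your own substitution actually produces $Q^-(x)=-\sum_{i\le\lfloor (x^*-a)/h\rfloor}\tilde f_i\prod_{j\neq i}\frac{x-x_j}{x_i-x_j}$, so the plus sign in your final display (copied from the lemma as stated) does not follow from the step before it; this sign is handled only implicitly in the paper as well (compare the $-\tilde f_{j-1}$ term appearing in the Simpson's-rule computation), so make the convention explicit rather than attributing it to Lemma~\ref{lema2}.
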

\begin{proof}
The proof is direct using Lemma \ref{lema2} and replacing $f_i$ in the Lagrange form of the polynomial
\begin{equation*}
\begin{aligned}
p_n(x)=\sum_{i=0}^{n+1}f_i\prod_{j=0, j\neq i}^{j=n+1}\dfrac{x-x_j}{x_i-x_j},
\end{aligned}
\end{equation*}
by the values $f^+_i$ or $f^-_i$ provided in (\ref{exp+}), depending of $f_i$ belonging to the $+$ or $-$ side.
\end{proof}

\begin{lemma}\label{lema4}
We consider the integral of the polynomial interpolation error from Lemma \ref{lema1} in the smooth interval $[x_0,x^*]$,
$$E_n=\int_{x_0}^{x^*}(f(x)-p_n(x))\ dx=\int_{x_0}^{x^*}(x-x_0)\cdots(x-x_n)f[x_0,\cdots, x_n,x]\ dx.$$
\begin{itemize}
\item If there is not a change of sign in the polynomial of the integrand in the interval $[x_0,x^*]$, the error can be written as,
\begin{equation}\label{lemma4_1}
\begin{aligned}
E_n&=\frac{f^{(n+1)}(\xi)}{(n+1)!}h^{n+2}\int_{0}^{\frac{x^*-x_0}{h}}\mu(\mu-1)\cdots(\mu-n+1)(\mu-n)\ d\mu
\end{aligned}
\end{equation}
for some $\xi\in[x_0,x_n]$.

\item If there is a change of sign in the polynomial of the integrand in the interval $[x_0,x^*]$, the error can be written as,
\begin{equation}\label{lemma4_2}
\begin{aligned}
E_n&=\int_{x_0}^{x^*}(f(x)-p_n(x))\ dx=\frac{f^{(n+1)}(\xi_1)}{(n+1)!}h^{n+2}\int_{0}^{\frac{x^*-x_0}{h}}\mu(\mu-1)\cdots(\mu-n+1)(\mu-n)\ d\mu\\
&+\frac{f^{(n+2)}(\xi_2)}{(n+2)!}h^{n+3}\int_{0}^{\frac{x^*-x_0}{h}} \mu(\mu-1)\cdots(\mu-n+1)(\mu-n)(\mu-\frac{x^*-x_0}{h})\ d\mu.
\end{aligned}
\end{equation}
for some $\xi_1,\xi_2\in[x_0,x_n]$.

\end{itemize}
\end{lemma}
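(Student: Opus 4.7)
The plan is to mimic closely the integration-by-parts argument already executed inside the proof of Theorem~\ref{teo2}, but now in the abstract setting of an arbitrary Newton--Cotes node set $\{x_0,\dots,x_n\}$ with uniform spacing $h$. The starting point in both cases is Lemma~\ref{lema1}, which lets us write
\[
E_n=\int_{x_0}^{x^*}\omega(x)\,f[x_0,\dots,x_n,x]\,dx,\qquad \omega(x)=(x-x_0)\cdots(x-x_n),
\]
together with the standard identity $f[x_0,\dots,x_n,\xi]=f^{(n+1)}(\xi)/(n+1)!$. The only variable that changes between cases~1 and~2 is how we deal with the possible sign changes of $\omega$, and both conclusions are then put in the target $\mu$-form via the substitution $x=x_0+\mu h$, under which $(x-x_j)=(\mu-j)h$ and $\omega(x)\,dx=h^{n+2}\mu(\mu-1)\cdots(\mu-n)\,d\mu$.

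For case~1 I would simply invoke the integral mean value theorem: since $\omega$ does not change sign on $[x_0,x^*]$, we can pull the divided difference out of the integral at some $\xi\in[x_0,x_n]$, and then change variables to arrive at \eqref{lemma4_1}. This is the routine step and requires no subtlety beyond the sign hypothesis.

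For case~2 I would introduce the antiderivative
\[
W(x)=\int_{x_0}^{x}\omega(t)\,dt,
\]
which vanishes at $x_0$, and integrate by parts exactly as was done around equation (\ref{der_difdiv}) in the proof of Theorem~\ref{teo2}. Using $\tfrac{d}{dx}f[x_0,\dots,x_n,x]=f[x_0,\dots,x_n,x,x]$, this gives
\[
E_n=W(x^*)\,f[x_0,\dots,x_n,x^*]-\int_{x_0}^{x^*}W(x)\,f[x_0,\dots,x_n,x,x]\,dx.
\]
The first term, after evaluating $f[x_0,\dots,x_n,x^*]=f^{(n+1)}(\xi_1)/(n+1)!$ and rescaling $W(x^*)$ to the $\mu$-variable, reproduces the leading contribution in \eqref{lemma4_2}. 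For the second term I would apply the integral mean value theorem (using the information that $W$ has constant sign on $[x_0,x^*]$, which is the companion hypothesis needed alongside ``one sign change of $\omega$''), evaluate $f[x_0,\dots,x_n,\xi_2,\xi_2]=f^{(n+2)}(\eta_2)/(n+2)!$, and compute $\int_{x_0}^{x^*}W(x)\,dx$ by a second integration by parts. That last computation is clean because one can choose the additive constant so the surface term at the upper limit vanishes, producing exactly the factor $(\mu-(x^*-x_0)/h)$ that appears in \eqref{lemma4_2}.

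The main obstacle I anticipate is precisely the sign-definiteness of $W$: the statement of the lemma hides this behind the phrase ``a change of sign in the polynomial of the integrand'', but for the mean value theorem to produce a single interior point $\xi_2$ in the second term one really needs $W$ not to oscillate. I would therefore make explicit, as a side-remark in the proof, that the setting considered (a single sign change of $\omega$ near a known interior point, as in the trapezoidal and Simpson applications where $x^*$ falls in a specific subinterval) is precisely the one in which $W$ stays on one side of zero on $[x_0,x^*]$; the symmetric Simpson case already worked out in Theorem~\ref{teo2} is the prototype that guarantees this. Once that sign property is in hand, everything else is routine bookkeeping with divided differences and the linear substitution in $\mu$.
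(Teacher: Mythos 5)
Your proposal is correct and follows essentially the same route as the paper: mean value theorem plus the substitution $x=x_0+\mu h$ in the first case, and in the second case the antiderivative $W(x)=\int_{x_0}^{x}(t-x_0)\cdots(t-x_n)\,dt$, integration by parts with $\tfrac{d}{dx}f[x_0,\dots,x_n,x]=f[x_0,\dots,x_n,x,x]$, and the mean value theorem applied to $\int_{x_0}^{x^*}W(x)\,dx$, whose sign-definiteness the paper settles by citing the classical fact (Isaacson--Keller, p.~309) that $W$ keeps a constant sign on all of $(x_0,x_n)$ for equally spaced nodes, which is the explicit justification your side-remark gestures at. The only cosmetic difference is that the paper evaluates $\int_{x_0}^{x^*}W(x)\,dx$ by interchanging the order of integration rather than by your second integration by parts, which yields the same factor $\bigl(\mu-\tfrac{x^*-x_0}{h}\bigr)$.
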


\begin{proof}
% \begin{itemize}
%\item 
First, if there is not a change of sign in the polynomial of the integrand in the smooth interval $[x_0,x^*]$, we can directly use the integral mean value theorem and the fact that,
\begin{equation}\label{dif_der}
 f[x_0,\cdots, x_n]=\frac{f^{(n)}(\xi)}{n!} \quad \textrm{ for some } \xi\in[x_0,\cdots, x_n],
\end{equation} 
to write,
\begin{equation*}
\begin{aligned}
E_n&=\int_{x_0}^{x^*}(f(x)-p_n(x))\ dx=\int_{x_0}^{x^*}(x-x_0)\cdots(x-x_n)f[x_0,\cdots, x_n,x]\ dx\\
&=\frac{f^{(n+1)}(\xi)}{(n+1)!}\int_{x_0}^{x^*} (x-x_0)\cdots(x-x_n)\ dx.
\end{aligned}
\end{equation*}
for some $\xi\in[x_0,x_n]$. Applying the change of variables $x=x_0+\mu h, 0\le \mu\le n$, we can write,
\begin{equation*}
\begin{aligned}
E_n&=\frac{f^{(n+1)}(\xi)}{(n+1)!}\int_{x_0}^{x^*} (x-x_0)\cdots(x-x_n)\ dt\\
&=\frac{f^{(n+1)}(\xi)}{(n+1)!}h^{n+2}\int_{0}^{\frac{x^*-x_0}{h}}\mu(\mu-1)\cdots(\mu-n+1)(\mu-n)\ d\mu.
\end{aligned}
\end{equation*}

%\item 
Secondly, If there is a change of sign in the polynomial of the integrand in the smooth interval $[x_0,x^*]$, we can define 
\begin{equation}\label{eqw}
w(y,x)=\int_{y}^{x}(t-x_0)\cdots(t-x_n)\ dt,
\end{equation} 
that satisfies that, at smooth zones, $$w(x_0,x_0)=w(x_0,x_n)=0, \quad w(x_0,x)>0 \quad \textrm{ for } x_0<x<x_n,$$
when $n$ is even, and 
$$w(x_0,x_0)=0, \quad w(x_0,x)<0 \quad \textrm{ for } x_0<x<x_n,$$
when $n$ is odd. In \cite{IK} (page 309) there is a complete proof of these facts.

Now, we can write,
\begin{equation}\label{2partesg}
\begin{aligned}
E_n&=\int_{x_0}^{x^*} w'(x_0,x)f[x_0,\cdots, x_n,x]\ dx.
\end{aligned}
\end{equation}
Integrating by parts and using that $w(x_0,x_0)=0$,
\begin{equation}\label{int1}
\scalemath{0.9}{
\begin{aligned}
\int_{x_0}^{x^*} w'(x_0,x) f[x_0,\cdots, x_n,x]\ dx
&=\left[w(x_0,x) f[x_0,\cdots, x_n,x]\right]_{x_0}^{x^*}-\int_{x_0}^{x^*} w(x_0,x) \frac{d}{dx}f[x_0,\cdots, x_n,x]\ dx\\
&=w(x_0,x^*) f[x_0,\cdots, x_n,x^*]-\int_{x_0}^{x^*} w(x_0,x) \frac{d}{dx}f[x_0,\cdots, x_n,x]\ dx.
\end{aligned}
}
\end{equation}
Using now (\ref{dif_der}), we can write,
\begin{equation*}
\begin{aligned}
w(x_0,x^*) f[x_0,\cdots, x_n,x^*]&=\frac{f^{(n+1)}(\xi_1)}{(n+1)!}w(x_0,x^*)=\frac{f^{(n+1)}(\xi_1)}{(n+1)!}\int_{x_0}^{x^*} (t-x_0)\cdots(t-x_n)\ dt.
\end{aligned}
\end{equation*}
for some $\xi_1\in[x_0,x^*]$. Applying again the change of variables $t=x_0+\mu h, 0\le \mu\le n$, we can write,
\begin{equation*}
\begin{aligned}
&\int_{x_0}^{x^*} (t-x_0)\cdots(t-x_n)\ dt=h^{n+2}\int_{0}^{\frac{x^*-x_0}{h}}\mu(\mu-1)\cdots(\mu-n+1)(\mu-n)\ d\mu.
\end{aligned}
\end{equation*}
Thus, we have that,
\begin{equation}\label{res0}
\begin{aligned}
w(x_0,x^*) f[x_0,\cdots, x_n,x^*]=\frac{f^{(n+1)}(\xi_1)}{(n+1)!}h^{n+2}\int_{0}^{\frac{x^*-x_0}{h}}\mu(\mu-1)\cdots(\mu-n+1)(\mu-n)\ d\mu,
\end{aligned}
\end{equation}
for some $\xi_1\in[x_0,x_n]$.

For the last integral in (\ref{int1}) we can use the fact that (see 3.2.17 page 147 of Atkinson)
\begin{equation}\label{der_difdivg}
\frac{d}{dx}f[x_0,\cdots, x_n,x]=f[x_0,\cdots, x_n,x,x],
\end{equation}
the integral mean value theorem, and (\ref{dif_der}) to write
\begin{equation*}
\begin{aligned}
-\int_{x_0}^{x^*} w(x_0,x) \frac{d}{dx}f[x_0,\cdots, x_n,x]\ dx&=-\int_{x_0}^{x^*} w(x_0,x) f[x_0,\cdots, x_n,x,x]\ dx\\
&=-f[x_0,\cdots, x_n,\eta_2,\eta_2]\int_{x_0}^{x^*} w(x_0,x) \ dx\\
&=-\frac{f^{(n+2)}(\xi_2)}{(n+2)!}\int_{x_0}^{x^*} \int_{x_0}^{x} (t-x_0)\cdots(t-x_n)\ dt dx,
\end{aligned}
\end{equation*}
for some $\eta_2, \xi_2\in[x_0,x_n]$.
Now we can change the order of integration and apply the change of variables $t=x_0+\mu h, 0\le \mu\le n$:
\begin{equation}\label{res3}
\scalemath{0.9}{
\begin{aligned}
\int_{x_0}^{x^*} \int_{x^*}^{x} (t-x_0)\cdots(t-x_n)\ dtdx&=\int_{x_0}^{x^*} \int_{t}^{x^*} (t-x_0)\cdots(t-x_n)\ dxdt\\
=&\int_{x_0}^{x^*} (t-x_0)\cdots(t-x_n)(d-t)\ dt\\
=&-h^{n+3}\int_{0}^{\frac{x^*-x_0}{h}} \mu(\mu-1)\cdots(\mu-n+1)(\mu-n)(\mu-\frac{x^*-x_0}{h})\ d\mu.
\end{aligned}
}
\end{equation}
Thus, we can write that
\begin{equation}\label{res2-1}
\begin{aligned}
&-\int_{x_0}^{x^*} w(x_0,x) \frac{d}{dx}f[x_0,\cdots, x_n,x]\ dx\\
&=\frac{f^{(n+2)}(\xi_2)}{(n+2)!}h^{n+3}\int_{0}^{\frac{x^*-x_0}{h}} \mu(\mu-1)\cdots(\mu-n+1)(\mu-n)(\mu-\frac{x^*-x_0}{h})\ d\mu.
\end{aligned}
\end{equation}
Joining the partial results in (\ref{res2-1}) and (\ref{res0}), we finish the proof,
\begin{equation*}
\begin{aligned}
E_n&=\int_{x_0}^{x^*}(f(x)-p_n(x))\ dx=\int_{x_0}^{x^*}(x-x_0)\cdots(x-x_n)f[x_0,\cdots, x_n,x]\ dx\\
&=\frac{f^{(n+1)}(\xi_1)}{(n+1)!}h^{n+2}\int_0^{\frac{x^*-x_0}{h}}\mu(\mu-1)\cdots(\mu-n+1)(\mu-n)\ d\mu\\
&+\frac{f^{(n+2)}(\xi_2)}{(n+2)!}h^{n+3}\int_{0}^{\frac{x^*-x_0}{h}} \mu(\mu-1)\cdots(\mu-n+1)(\mu-n)(\mu-\frac{x^*-x_0}{h})\ d\mu,
\end{aligned}
\end{equation*}
for some $\xi_1,\xi_2\in[x_0,x_n]$.
%\end{itemize}
\end{proof}
\noindent
From Lemma \ref{lema4} we can get the following corollary.

\begin{corollary}\label{cor1}
If the smooth interval is $[x^*,x_n]$:
\begin{itemize}
\item If there is not a change of sign in the polynomial of the integrand in the interval $[x^*,x_n]$, the error can be written as,
\begin{equation}\label{lemma4_1}
\begin{aligned}
E_n&=\int_{x^*}^{x_n}(f(x)-p_n(x))\ dx=(-1)^{n+2}\frac{f^{(n+1)}(\xi)}{(n+1)!}h^{n+2}\int_0^{\frac{x_n-x^*}{h}}\mu(\mu-1)\cdots(\mu-n+1)(\mu-n)\ d\mu
\end{aligned}
\end{equation}
for some $\xi\in[x_0,x_n]$.

\item If there is a change of sign in the polynomial of the integrand in the interval $[x^*,x_n]$, the error can be written as,
\begin{equation}\label{lemma4_2}
\begin{aligned}
E_n&=\int_{x^*}^{x_n}(f(x)-p_n(x))\ dx=-\frac{f^{(n+1)}(\xi_1)}{(n+1)!}h^{n+2}\int_{\frac{x^*-x_0}{h}}^{\frac{x_n-x_0}{h}}\mu(\mu-1)\cdots(\mu-n+1)(\mu-n)\ d\mu\\
&+\frac{f^{(n+2)}(\xi_2)}{(n+2)!}h^{n+3}\int_{\frac{x^*-x_0}{h}}^{\frac{x_n-x_0}{h}} \mu(\mu-1)\cdots(\mu-n+1)(\mu-n)(\mu-\frac{x_n-x_0}{h})\ d\mu.
\end{aligned}
\end{equation}
for some $\xi_1,\xi_2\in[x_0,x_n]$.
\end{itemize}
\end{corollary}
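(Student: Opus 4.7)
The plan is to mimic the proof of Lemma~\ref{lema4} almost verbatim, reflecting the argument around $x^{*}$ so that the auxiliary antiderivative is anchored at the right endpoint $x_n$ rather than at the left endpoint $x_0$.

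For the case in which the nodal polynomial $(x-x_0)\cdots(x-x_n)$ does not change sign on $[x^{*},x_n]$, I would start from the representation $f(x)-p_n(x)=(x-x_0)\cdots(x-x_n)f[x_0,\ldots,x_n,x]$ supplied by Lemma~\ref{lema1}, apply the integral mean value theorem, and use (\ref{dif_der}) to convert the resulting divided difference into $f^{(n+1)}(\xi)/(n+1)!$. The change of variables $x=x_0+\mu h$ then gives the integrand $\mu(\mu-1)\cdots(\mu-n)$ over the interval $[(x^{*}-x_0)/h,(x_n-x_0)/h]$. Rewriting this via the reflection $\mu\mapsto n-\mu$ recasts it as $\int_0^{(x_n-x^{*})/h}\mu(\mu-1)\cdots(\mu-n)\,d\mu$, and the $n+1$ sign flips from the reflection, combined with one flip from the Jacobian, yield the prefactor $(-1)^{n+2}$ in the statement.

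For the sign-changing case I would replace the auxiliary function $w(y,x)=\int_y^x(t-x_0)\cdots(t-x_n)\,dt$ used in Lemma~\ref{lema4} by its mirror image $\widetilde w(x)=\int_x^{x_n}(t-x_0)\cdots(t-x_n)\,dt$, so that $\widetilde w(x_n)=0$ and $\widetilde w'(x)=-(x-x_0)\cdots(x-x_n)$. The sign behaviour of $\widetilde w$ on $[x^{*},x_n]$ is controlled by the same argument used in \cite{IK} (page~309), transferred to the right subinterval. I would then write $E_n=-\int_{x^{*}}^{x_n}\widetilde w'(x)f[x_0,\ldots,x_n,x]\,dx$ and integrate by parts using $\widetilde w(x_n)=0$, obtaining a boundary term at $x^{*}$ plus a remainder integral in which the identity (\ref{der_difdivg}) turns the derivative of the divided difference into $f[x_0,\ldots,x_n,x,x]$. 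The integral mean value theorem applied to each piece, together with (\ref{dif_der}), produces factors $f^{(n+1)}(\xi_1)/(n+1)!$ and $f^{(n+2)}(\xi_2)/(n+2)!$; a final exchange of the order of integration and the substitution $x=x_0+\mu h$ then yield the polynomial factors in $\mu$ displayed in the statement.

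The main difficulty is the careful bookkeeping of signs. Because the antiderivative is now anchored at $x_n$ rather than $x_0$, the boundary term of the integration by parts, the exchange-of-order step analogous to (\ref{res3}) (where the factor $(d-t)$ is replaced by $(t-x^{*})$ of opposite sign), and the variable change all contribute sign flips that must be tracked to arrive at the precise signs in the two cases of the statement; in particular the prefactor $(-1)^{n+2}$ only emerges after the first integral is recast in the symmetric form used in the corollary. Once this mirror symmetry is verified, no genuinely new ideas beyond those of Lemma~\ref{lema4} are needed.
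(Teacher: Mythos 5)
Your plan follows the paper's proof essentially step by step: for the no-sign-change case the paper performs precisely the reflection you describe (there via the substitution $y=x_n-x$ before applying the integral mean value theorem and (\ref{dif_der})), and for the sign-change case it introduces exactly your mirrored antiderivative, $w(x_n,x)=\int_{x_n}^{x}(t-x_0)\cdots(t-x_n)\,dt$ (your $\widetilde w$ is $-w(x_n,x)$), uses $w(x_n,x_n)=0$ to integrate by parts, converts $\frac{d}{dx}f[x_0,\ldots,x_n,x]$ via (\ref{der_difdivg}), applies the mean value theorem to each piece, and exchanges the order of integration before substituting $t=x_0+\mu h$. So structurally there is nothing new in your route compared with the paper's.

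However, the one point you yourself identify as the crux --- the sign bookkeeping --- is handled incorrectly in your sketch. In the reflection $\mu\mapsto n-\mu$ the sign of the Jacobian is absorbed when you restore the orientation of the limits of integration, so the net prefactor produced by the reflection is $(-1)^{n+1}$ from the $n+1$ linear factors alone; counting ``one flip from the Jacobian'' on top of the $n+1$ flips is a double count, and it is only this double count that lands you on the stated $(-1)^{n+2}$. A concrete check with $n=1$ shows the issue: on the smooth piece $[x^*,x_1]$ one has
\begin{equation*}
\int_{x^*}^{x_1}(x-x_0)(x-x_1)\,dx \;=\; h^{3}\int_{0}^{\frac{x_1-x^*}{h}}\mu(\mu-1)\,d\mu ,
\end{equation*}
with a plus sign, i.e. the prefactor is $(-1)^{n+1}=+1$ rather than $(-1)^{n+2}=-1$. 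Hence the step by which you claim to obtain the displayed prefactor would not survive being written out carefully; the same caution applies in the sign-change case, where the boundary term of your integration by parts is $-\widetilde w(x^*)\,f[x_0,\ldots,x_n,x^*]$ and its sign must be tracked consistently against the orientation of the $\mu$-integral, and where the exchange of order of integration contributes a further sign (the analogue of (\ref{res3})). Until this accounting is done explicitly --- node by node, including the limit reversal --- the proposal does not actually establish the signs that constitute the whole content of the corollary beyond Lemma \ref{lema4}.
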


\begin{proof}
%\begin{itemize}
%\item 
First, if there is not a change of sign in the polynomial of the integrand in the interval $[x^*,x_n]$, we just need to do the change of variables $y=x_n-x$ and proceed as in Lemma \ref{lema4},

\begin{equation*}
\begin{aligned}
E_n&=\int_{x^*}^{x_n}(f(x)-p_n(x))\ dx=\int_{x^*}^{x_n}(x-x_0)\cdots(x-x_n)f[x_0,\cdots, x_n,x]\ dx\\
&=(-1)^{n+2}\int_0^{x_n-x^*}(y-(x_n-x_0))\cdots(y-(x_n-x_{n-1}))yf[x_0,\cdots, x_n,x_n-y]\ dy\\
&=(-1)^{n+2}\frac{f^{(n+1)}(\xi)}{(n+1)!}\int_{0}^{x_n-x^*} (y-nh)\cdots(y-h)y\ dy.
\end{aligned}
\end{equation*}
for some $\xi\in[x_0,x_n]$. Applying the change of variables $y=\mu h, 0\le \mu\le n$, we can write,
\begin{equation*}
\begin{aligned}
E_n&=(-1)^{n+2}\frac{f^{(n+1)}(\xi)}{(n+1)!}h^{n+2}\int_0^{\frac{x_n-x^*}{h}}\mu(\mu-1)\cdots(\mu-n+1)(\mu-n)\ d\mu.
\end{aligned}
\end{equation*}
%\item 
Secondly, if there is a change of sign in the polynomial of the integrand in the smooth interval $[x^*,x_n]$, we can define
\begin{equation}\label{weq}
\begin{aligned}
w(x_n,x)&=\int_{x_n}^{x}(t-x_0)\cdots(t-x_n)\ dt.
\end{aligned}
\end{equation} 
that satisfies, by the symmetry of the polynomials used, that at smooth zones, $$w(x_n,x_n)=w(x_n,x_0)=0, \quad w(x_n,x)<0 \quad \textrm{ for } x_0<x<x_n,$$
when $n$ is even, and 
$$w(x_n,x_n)=0, \quad w(x_n,x)<0 \quad \textrm{ for } x_0<x<x_n,$$
when $n$ is odd.

Following similar arguments to those in \cite{IK} (page 309), or just using symmetry arguments, the proof of these facts can be easily obtained.

Now, we can write the error as in Lemma \ref{lema4},
\begin{equation*}
\begin{aligned}
E_n&=\int_{x^*}^{x_n} w'(x_n,x)f[x_0,\cdots, x_n,x]\ dx.
\end{aligned}
\end{equation*}
and integrate by parts,
\begin{equation}\label{int12}
\scalemath{0.9}{
\begin{aligned}
\int_{x^*}^{x_n} w'(x_n,x) f[x_0,\cdots, x_n,x]\ dx
&=\left[w(x_n,x) f[x_0,\cdots, x_n,x]\right]_{x^*}^{x_n}-\int_{x^*}^{x_n} w(x_n,x) \frac{d}{dx}f[x_0,\cdots, x_n,x]\ dx\\
&=-w(x_n,x^*) f[x_0,\cdots, x_n,x^*]-\int_{x^*}^{x_n} w(x_n,x) \frac{d}{dx}f[x_0,\cdots, x_n,x]\ dx.
\end{aligned}
}
\end{equation}
Proceeding exactly as in Lemma \ref{lema4} and observing that,
$$w(x_n,x^*)=-w(x^*,x_n),$$
 we obtain
\begin{equation}\label{res01}
\begin{aligned}
w(x_n,x^*) f[x_0,\cdots, x_n,x^*]=-\frac{f^{(n+1)}(\xi_1)}{(n+1)!}h^{n+2}\int^{\frac{x_n-x_0}{h}}_{\frac{x^*-x_0}{h}}\mu(\mu-1)\cdots(\mu-n+1)(\mu-n)\ d\mu,
\end{aligned}
\end{equation}
for some $\xi_1\in[x_0,x_n]$.

For the last integral in (\ref{int12}) we can proceed again as in Lemma \ref{lema4} to write
\begin{equation*}
\begin{aligned}
-\int_{x^*}^{x_n} w(x_n,x) \frac{d}{dx}f[x_0,\cdots, x_n,x]\ dx&=-\int_{x^*}^{x_n}w(x_n,x) f[x_0,\cdots, x_n,x,x]\ dx\\
&=-f[x_0,\cdots, x_n,\eta_2,\eta_2]\int_{x^*}^{x_n} w(x_n,x) \ dx\\
&=-\frac{f^{(n+2)}(\xi_2)}{(n+2)!}\int_{x^*}^{x_n} \int_{x_n}^{x} (t-x_0)\cdots(t-x_n)\ dtdx,
\end{aligned}
\end{equation*}
for some $\eta_2, \xi_2\in[x_0,x_n]$.
Now we can change the order of integration and apply the change of variables $t=x_0+\mu h, 0\le \mu\le n$:
\begin{equation}\label{res3}
\begin{aligned}
-\int_{x^*}^{x_n} \int_{x_n}^{x} (t-x_0)\cdots(t-x_n)\ dtdx&=-\int_{x^*}^{x_n} \int_{t}^{x_n} (t-x_0)\cdots(t-x_n)\ dxdt\\
=&-\int_{x^*}^{x_n} (t-x_0)\cdots(t-x_n)(x_n-t)\ dt\\
=&h^{n+3}\int_{\frac{x^*-x_0}{h}}^{\frac{x_n-x_0}{h}} \mu(\mu-1)\cdots(\mu-n+1)(\mu-n)(\mu-\frac{x_n-x_0}{h})\ d\mu.
\end{aligned}
\end{equation}
Thus, we can write that
\begin{equation}\label{res2}
\begin{aligned}
&-\int_{x^*}^{x_n} w(x_n,x) \frac{d}{dx}f[x_0,\cdots, x_n,x]\ dx\\
&=\frac{f^{(n+2)}(\xi_2)}{(n+2)!}h^{n+3}\int_{\frac{x^*-x_0}{h}}^{\frac{x_n-x_0}{h}} \mu(\mu-1)\cdots(\mu-n+1)(\mu-n)(\mu-\frac{x_n-x_0}{h})\ d\mu.
\end{aligned}
\end{equation}
Joining the partial results in (\ref{res01}) and (\ref{res2}), we finish the proof,
\begin{equation*}
\begin{aligned}
E_n&=\int_{x^*}^{x_n}(f(x)-p_n(x))\ dx=\int_{x^*}^{x_n}(x-x_0)\cdots(x-x_n)f[x_0,\cdots, x_n,x]\ dx\\
&=-\frac{f^{(n+1)}(\xi_1)}{(n+1)!}h^{n+2}\int_{\frac{x^*-x_0}{h}}^{\frac{x_n-x_0}{h}}\mu(\mu-1)\cdots(\mu-n+1)(\mu-n)\ d\mu\\
&+\frac{f^{(n+2)}(\xi_2)}{(n+2)!}h^{n+3}\int_{\frac{x^*-x_0}{h}}^{\frac{x_n-x_0}{h}} \mu(\mu-1)\cdots(\mu-n+1)(\mu-n)(\mu-\frac{x_n-x_0}{h})\ d\mu,
\end{aligned}
\end{equation*}
for some $\xi_1,\xi_2\in[x^*,x_n]$.

%\end{itemize}
\end{proof}

\begin{theorem}\label{teo3}
We suppose that the piecewise continuous function $f$ has singularities at $x^*$ up to the n-th derivative. The subtraction of the correction term,
\begin{equation}\label{errorteo3.1}
\scalemath{0.9}{
\begin{aligned}
C&=\int_a^{x^*}Q^+(x)\ dx+\int_{x^*}^bQ^-(x)\ dx
\end{aligned}
}
\end{equation}
to the numerical integration formula, assures that the error is:
\begin{itemize}

\item If the discontinuity is placed in the interval $[x_0,x_1]$
\begin{equation*}
\scalemath{0.9}{
\begin{aligned}
E^*(f)=E(f)+C&=C_n^1\frac{(f^-)^{(n+1)}(\xi_1)}{(n+1)!}h^{n+2}+C_n^2\frac{(f^+)^{(n+1)}(\xi_2)}{(n+1)!}h^{n+2}+C_n^3\frac{(f^+)^{(n+2)}(\xi_3)}{(n+2)!}h^{n+3},
\end{aligned}
}
\end{equation*}
with $\xi_1, \xi_2,\xi_3\in[x_0,x_n]$, and
\begin{equation*}
\scalemath{0.9}{
\begin{aligned}
C_n^1&=\int_{0}^{\frac{x^*-x_0}{h}} \mu(\mu-1)\cdots(\mu-n+1)(\mu-n)\ d\mu,\\
C_n^2&=\int_{\frac{x^*-x_0}{h}}^{\frac{x_n-x_0}{h}} \mu(\mu-1)\cdots(\mu-n+1)(\mu-n)\ d\mu.\\
C_n^3&=\int_{\frac{x^*-x_0}{h}}^{\frac{x_n-x_0}{h}} \mu(\mu-1)\cdots(\mu-n+1)(\mu-n)(\mu-n)\ d\mu.
\end{aligned}
}
\end{equation*}

\item If the discontinuity is placed in the interval $[x_{n-1},x_n]$, 
\begin{equation*}
\scalemath{0.9}{
\begin{aligned}
E(f)+C&=C_n^1\frac{(f^-)^{(n+1)}(\xi_1)}{(n+1)!}h^{n+2}+C_n^2\frac{(f^-)^{(n+2)}(\xi_2)}{(n+2)!}h^{n+3}+(-1)^{n+2}C_n^3\frac{(f^+)^{(n+1)}(\xi_3)}{(n+1)!}h^{n+2},
\end{aligned}
}
\end{equation*}
with $\xi_1,\xi_2, \xi_3\in[x_0,x_n]$, and
\begin{equation*}
\scalemath{0.9}{
\begin{aligned}
C_n^1&=\int_{0}^{\frac{x^*-x_0}{h}} \mu(\mu-1)\cdots(\mu-n+1)(\mu-n)\ d\mu,\\
C_n^2&=\int_0^{\frac{x^*-x_0}{h}} \mu(\mu-1)\cdots(\mu-n+1)(\mu-n)(\mu-\frac{x^*-x_0}{h})\ d\mu.\\
C_n^3&=\int_{0}^{\frac{x_n-x^*}{h}} \mu(\mu-1)\cdots(\mu-n+1)(\mu-n)\ d\mu.
\end{aligned}
}
\end{equation*}

\item In any other case,
\begin{equation*}
\scalemath{0.85}{
\begin{aligned}
E^*(f)=E(f)+C&=C_n^1\frac{(f^-)^{(n+1)}(\xi_1)}{(n+1)!}h^{n+2}+C_n^2\frac{(f^-)^{(n+2)}(\xi_2)}{(n+2)!}h^{n+3}+C_n^3\frac{(f^+)^{(n+1)}(\xi_3)}{(n+1)!}h^{n+2}+C_n^4\frac{(f^+)^{(n+2)}(\xi_4)}{(n+2)!}h^{n+3}.
\end{aligned}
}
\end{equation*}
with $\xi_1,\xi_2, \xi_3,\xi_4\in[x_0,x_n]$, and
\begin{equation*}
\scalemath{0.9}{
\begin{aligned}
C_n^1&=\int_{0}^{\frac{x^*-x_0}{h}} \mu(\mu-1)\cdots(\mu-n+1)(\mu-n)\ d\mu,\\
C_n^2&=\int_{0}^{\frac{x^*-x_0}{h}} \mu(\mu-1)\cdots(\mu-n+1)(\mu-n)(\mu-\frac{x^*-x_0}{h})\ d\mu,\\
C_n^3&=\int_{\frac{x^*-x_0}{h}}^{\frac{x_n-x_0}{h}} \mu(\mu-1)\cdots(\mu-n+1)(\mu-n)\ d\mu.\\
C_n^4&=\int_{\frac{x^*-x_0}{h}}^{\frac{x_n-x_0}{h}} \mu(\mu-1)\cdots(\mu-n+1)(\mu-n)(\mu-n)\ d\mu.
\end{aligned}
}
\end{equation*}

\end{itemize}
\end{theorem}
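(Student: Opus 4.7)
The plan is to split the integration interval at $x^*$, apply Lemma \ref{lema3} on each side to separate the jump contribution from the smooth-interpolation polynomial, and then feed the two resulting smooth-region interpolation errors into Lemma \ref{lema4} and Corollary \ref{cor1}.

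First I would write
\[
E^*(f) = E(f) + C = \int_a^{x^*}(f^-(x) - p_n(x))\,dx + \int_{x^*}^b(f^+(x) - p_n(x))\,dx + C.
\]
Using the second bullet of Lemma \ref{lema3} to substitute $p_n = p_n^- + Q^+$ on $[a,x^*]$ and the first bullet to substitute $p_n = p_n^+ + Q^-$ on $[x^*,b]$, the definition (\ref{errorteo3.1}) of $C$ is precisely tailored so that the $\int Q^\pm$ terms cancel. This reduces $E^*(f)$ to a sum of two purely smooth interpolation errors
\[
E^*(f) = \int_a^{x^*}(f^-(x) - p_n^-(x))\,dx + \int_{x^*}^b(f^+(x) - p_n^+(x))\,dx,
\]
modulo a residue of order $O(h^{n+3})$ coming from the Taylor tail in the $\tilde f_i$.

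Next I would split into the three cases according to the location of $x^*$, based on whether the monic nodal polynomial $(x-x_0)\cdots(x-x_n)$ changes sign on $[a,x^*]$ or on $[x^*,b]$. After the affine change $\mu = (x-x_0)/h$ this polynomial becomes $\mu(\mu-1)\cdots(\mu-n)$, whose interior zeros are at $\mu = 1,2,\ldots,n-1$. Hence the left piece $[a,x^*]$ is sign-definite exactly when $x^* \in [x_0,x_1]$, and the right piece $[x^*,b]$ is sign-definite exactly when $x^* \in [x_{n-1},x_n]$. In the first case I would invoke the no-sign-change branch of Lemma \ref{lema4} on the left (one term, constant $C_n^1$) and the sign-change branch of Corollary \ref{cor1} on the right (two terms, constants $C_n^2$ and $C_n^3$); the second case is symmetric, invoking the sign-change branch of Lemma \ref{lema4} and the no-sign-change branch of Corollary \ref{cor1}; in the remaining, generic case both pieces contribute two terms each, producing the four constants $C_n^1,\ldots,C_n^4$ exactly as displayed in the statement.

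The main obstacle I anticipate is bookkeeping: matching the dimensionless integrals produced by Lemma \ref{lema4} and Corollary \ref{cor1} with the $C_n^k$ in the statement, and tracking the sign factor $(-1)^{n+2}$ introduced by Corollary \ref{cor1} in its no-sign-change branch (which in the statement is absorbed into the free point $\xi_i \in [x_0,x_n]$ everywhere except where it appears explicitly as the prefactor of $C_n^3$ in the second case). A minor additional check is that the $O(h^{n+1})$ Taylor residue in each $\tilde f_i$, when integrated against $O(1)$ Lagrange weights over an interval of length $O(h)$, contributes only at order $O(h^{n+2})$, strictly subdominant to the leading $h^{n+2}$ terms retained in the conclusion.
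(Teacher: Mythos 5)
Your proposal follows essentially the same route as the paper, whose entire proof is the one-line remark that the result is straightforward from Lemma \ref{lema3} (the $Q^{\pm}$ integrals cancelling against $C$) together with Lemma \ref{lema4} and Corollary \ref{cor1}, with the case split governed exactly as you say by whether the nodal polynomial changes sign on $[a,x^*]$ or on $[x^*,b]$. The only blemish is your closing remark on the Taylor tail, which is internally inconsistent (first $O(h^{n+3})$, then $O(h^{n+2})$) and at $O(h^{n+2})$ would be of the same order as the retained terms rather than subdominant; this looseness, however, is shared by the paper itself, which writes the tail inside $\tilde f_i$ so that $Q^{\pm}$, and hence $C$, formally contain it and the cancellation is exact.
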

\begin{proof}
The proof is straightforward using Lemmas \ref{lema3}, \ref{lema4} and Corollary \ref{cor1}.
\end{proof}

\subsection{Correction terms for commonly used Newton-Cotes formulas}\label{CF}

In Table \ref{tabla_corr} we present some expressions for the correction terms $C$ in (\ref{errorteo3.1}). In Table \ref{tabla_corr} we have used the notation $C_{n,j}, j=1\cdots n$, being $n$ the degree of the interpolating polynomial used to obtain the integration rule. Thus, for the trapezoidal rule there is only the term $C_{1,1}$. For the Simpson's $1/3$ rule there are two terms: $C_{2,1}$ if the discontinuity falls at an odd interval and $C_{2,2}$ if the discontinuity falls at an odd interval. For the Simpson's $3/8$ rule, there are three terms: $C_{3,1}$ if $\left(\lceil\frac{x^*}{h}\rceil mod\ 3\right)=1$, $C_{3,2}$ if $\left(\lceil\frac{x^*}{h}\rceil mod\ 3\right)=2$ and $C_{3,3}$ if $\left(\lceil\frac{x^*}{h}\rceil mod\ 3\right)=0$. For higher orders, the notation is similar. Just to show an example, in Figure \ref{fig_disc3} we should use $C_{3,1}$ in the case presented to the left, $C_{3,2}$ in the case presented at the middle and $C_{3,3}$ in the case to the right.

\begin{figure}[ht]
\begin{minipage}{.2\textwidth}
\begin{center}
\resizebox{8cm}{!} {
\begin{picture}(500,150)(150,-20)
%escala k=2
\linethickness{0.3mm}
\put(120,-10){\line(100,0){325}}
%\put(0,-10){\line(0,1){5}}
%\put(80,-10){\line(0,1){5}}
%\put(160,-10){\line(0,1){5}}
%\put(200,-10){\line(0,1){5}}%x_{j+1/2}
\put(192,-10){\line(0,1){5}}%x^*
\put(240,-10){\line(0,1){5}}
\put(320,-10){\line(0,1){5}}
%\put(400,-10){\line(0,1){5}}
%\put(480,-10){\line(0,1){5}}
%celdas escala k=2
\put(140,-20){$\cdots$}
%\put(0,86){$f_{j-2}$}
%\put(80,77){$f_{j-1}$}
\put(160,55){$f_{j-1}^-$}
\put(235,54){$f_{j}^+$}
\put(315,61){$f_{j+1}^+$}
\put(400,71){$f_{j+2}^+$}
\put(420,-20){$\cdots$}

%\put(-5,-23){$x_{j-2}$}
%\put(75,-23){$x_{j-1}$}
\put(155,-23){$x_{j-1}$}
%\put(195,-23){$x_{j+1/2}$}
\put(170,3){$x^*=x_{j-1}+\alpha$}
\put(235,-23){$x_{j}$}
\put(315,-23){$x_{j+1}$}
\put(395,-23){$x_{j+2}$}
%\put(475,-23){$x_{j+4}$}

%86  30   6  74  114  

%\put(0,-10){\line(0,1){86}}
%\put(0,76){\circle{5}}
%\put(80,-10){\line(0,1){77}}
%\put(80,67){\circle{5}}
\put(160,-10){\line(0,1){55}}
\put(160,45){\circle{5}}
\put(240,-10){\line(0,1){54}}
\put(240,44){\circle{5}}
\put(320,-10){\line(0,1){61}}
\put(320,51){\circle{5}}
\put(400,-10){\line(0,1){71}}
\put(400,61){\circle{5}}

%polinomio izquierda
\qbezier(130,50)(160,50)(192,40)
%prolongacion -
\qbezier[80](192,40)(300,20)(400,20)
\put(325,17){$f^-_{j+1}$}
\put(400,17){$f^-_{j+2}$}
\put(240,15){$f^-_{j}$}
%polinomio derecha
\qbezier(192,30)(240,55)(400,60)
%prolongacion +
\qbezier[10](160,20)(180,25)(192,30)
\put(140,20){$f^+_{j-1}$}
%lado menos
\put(180,70){\circle{10}}
\put(176,68){$-$}
%lado mas
\put(220,70){\circle{10}}
\put(216,68){$+$}
\end{picture}
}
%\end{figure}
\end{center}
\end{minipage}
\begin{minipage}{.2\textwidth}
%\begin{figure}
\resizebox{8cm}{!} {
\begin{picture}(500,150)(-80,-20)
%escala k=2
\linethickness{0.3mm}
\put(50,-10){\line(1,0){325}}
%\put(0,-10){\line(0,1){5}}
\put(80,-10){\line(0,1){5}}
\put(160,-10){\line(0,1){5}}
%\put(200,-10){\line(0,1){5}}%x_{j+1/2}
\put(192,-10){\line(0,1){5}}%x^*
\put(240,-10){\line(0,1){5}}
%\put(320,-10){\line(0,1){5}}
%\put(400,-10){\line(0,1){5}}
%\put(480,-10){\line(0,1){5}}
%celdas escala k=2
\put(55,-20){$\cdots$}
%\put(0,86){$f_{j-2}$}
\put(80,77){$f^-_{j-1}$}
\put(160,55){$f^-_{j}$}
\put(235,54){$f^+_{j+1}$}
\put(315,61){$f_{j+2}^+$}
%\put(400,186){$f_{j+3}$}
\put(260,-20){$\cdots$}

%\put(-5,-23){$x_{j-2}$}
\put(75,-23){$x_{j-1}$}
\put(155,-23){$x_{j}$}
%\put(195,-23){$x_{j+1/2}$}
\put(170,3){$x^*=x_{j}+\alpha$}
\put(235,-23){$x_{j+1}$}
\put(315,-23){$x_{j+2}$}
%\put(395,-23){$x_{j+3}$}
%\put(475,-23){$x_{j+4}$}

%86  30   6  74  114  

%\put(0,-10){\line(0,1){86}}
%\put(0,76){\circle{5}}
\put(80,-10){\line(0,1){77}}
\put(80,67){\circle{5}}
\put(160,-10){\line(0,1){55}}
\put(160,45){\circle{5}}
\put(240,-10){\line(0,1){54}}
\put(240,44){\circle{5}}
\put(320,-10){\line(0,1){61}}
\put(320,51){\circle{5}}
%\put(400,-10){\line(0,1){186}}
%\put(400,176){\circle{5}}

%polinomio izquierda
\qbezier(60,70)(160,50)(192,35)
%prolongacion -
\qbezier[40](192,35)(250,17)(320,20)
\put(245,20){$f^-_{j+1}$}
\put(325,20){$f^-_{j+2}$}
%polinomio derecha
\qbezier(192,25)(250,54)(320,50)
%prolongacion +
\qbezier[40](80,5)(135,7)(192,25)
\put(140,20){$f^+_{j}$}
\put(60,5){$f^+_{j-1}$}
%lado menos
\put(180,70){\circle{10}}
\put(176,68){$-$}
%lado mas
\put(220,70){\circle{10}}
\put(216,68){$+$}
\end{picture}
}
%\end{center}
\end{minipage}
\begin{minipage}{.2\textwidth}
%\begin{figure}
\resizebox{8cm}{!} {
\begin{picture}(500,150)(-220,-20)
%escala k=2
\linethickness{0.3mm}
\put(50,-10){\line(1,0){325}}
%\put(0,-10){\line(0,1){5}}
\put(80,-10){\line(0,1){5}}
\put(160,-10){\line(0,1){5}}
%\put(200,-10){\line(0,1){5}}%x_{j+1/2}
\put(272,-10){\line(0,1){5}}%x^*
\put(240,-10){\line(0,1){5}}
%\put(320,-10){\line(0,1){5}}
%\put(400,-10){\line(0,1){5}}
%\put(480,-10){\line(0,1){5}}
%celdas escala k=2
\put(55,-20){$\cdots$}
%\put(0,86){$f_{j-2}$}
\put(80,80){$f^-_{j-1}$}
\put(160,85){$f^-_{j}$}
\put(235,64){$f^-_{j+1}$}
\put(315,61){$f_{j+2}^+$}
%\put(400,186){$f_{j+3}$}
\put(260,-20){$\cdots$}

%\put(-5,-23){$x_{j-2}$}
\put(75,-23){$x_{j-1}$}
\put(155,-23){$x_{j}$}
%\put(195,-23){$x_{j+1/2}$}
\put(235,-23){$x_{j+1}$}
\put(250,3){$x^*=x_{j+2}-\alpha$}
\put(315,-23){$x_{j+2}$}
%\put(395,-23){$x_{j+3}$}
%\put(475,-23){$x_{j+4}$}

%86  30   6  74  114  

%\put(0,-10){\line(0,1){86}}
%\put(0,76){\circle{5}}
\put(80,-10){\line(0,1){82}}
\put(80,72){\circle{5}}
\put(160,-10){\line(0,1){81}}
\put(160,71){\circle{5}}
\put(240,-10){\line(0,1){64}}
\put(240,54){\circle{5}}
\put(320,-10){\line(0,1){61}}
\put(320,51){\circle{5}}
%\put(400,-10){\line(0,1){186}}
%\put(400,176){\circle{5}}

%polinomio izquierda
\qbezier(60,70)(160,85)(272,45)
%prolongacion -
\qbezier[20](272,45)(295,36)(320,25)
\put(220,20){$f^+_{j+1}$}
\put(325,20){$f^-_{j+2}$}

%polinomio derecha
\qbezier(272,35)(285,38)(320,50)
%prolongacion +
\qbezier[60](80,5)(180,7)(272,35)
\put(140,10){$f^+_{j}$}
\put(60,5){$f^+_{j-1}$}
%lado menos
\put(260,70){\circle{10}}
\put(256,68){$-$}
%lado mas
\put(290,70){\circle{10}}
\put(286,68){$+$}
\end{picture}
}
%\end{center}
\end{minipage}
\caption{Three examples of functions with singularities (solid line) placed in different intervals at a position $x^*$. We have labeled the domain to the left of the singularity as $-$ and the one to the right as $+$. We have also represented with a dashed line the prolongation of the functions through Taylor expansions at both sides of the discontinuity.}\label{fig_disc3}
\end{figure}
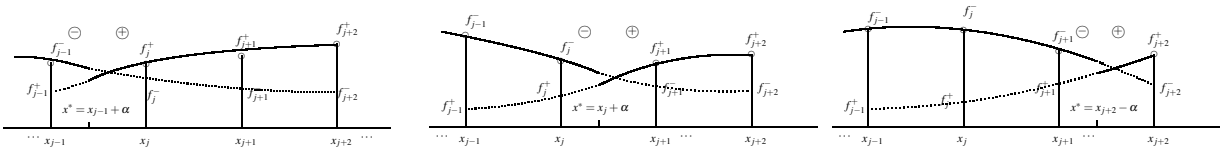

\section{Numerical experiments}\label{experimentos}

In this section we will apply the classical and corrected simple and composite trapezoid rule, Simpson's 1/3 rule and Simpson's 3/8 rule to data obtained from the discretisation of the function in (\ref{function}), that presents jumps in the function and all the derivatives. % and plotted in Figure \ref{plot_func}. 
We will consider that we start from discretized data and that the location of the singularity, as well as the jump conditions, are known exactly. As it was motivated in the abstract and the introduction, we suppose that the function is only known at data points. 

In the grid refinement experiments, the error $E_i$ is calculated as the absolute value of the difference between the exact integral and the approximated one, obtained via the simple or composite quadrature rules. The order of accuracy is obtained in general as,
\begin{equation}\label{orden}
O_i=\frac{\ln(E_i/ E_{i+1})}{\ln(n_i/n_{i+1})},
\end{equation}
being $E_i$ the error obtained for a grid of $n_i$ points and $E_{i+1}$, the error obtained for a grid of $n_{i+1}$ points (in the experiments, $n_{i+1}=2n_i$ for the trapezoid rule and the Simpson's $\frac{3}{8}$ rule or $n_{i+1}=2n_i+1$ for the Simpson's $\frac{1}{3}$ rule).

\begin{equation}\label{function}
\scalemath{0.9}{
f(x)=\left\{\begin{array}{ll}
\cos\left(\pi x\right) + 10, & \textrm{ if } a\le x<b,\\
\sin\left(\pi x\right), & \textrm{ if } b\le x\le c.
\end{array}
\right.
}
\end{equation}
The results observed in the experiments are similar for any other piecewise continuous function that we have explored. Let us first check the numerical order attained by the simple quadrature rules. For this first experiment we initially set $a=0$, $c=0.5$. Then, we divide the interval $[a, c]$ in the number of panels used by the simple quadrature rule that we want to check. The grid-spacing is represented by $h$ and we set $b=(n+d)h$, where $d$ is a number in the interval $[0,1]$, and $n=0, 1, 2, \ldots$, depending on the number of panels that the particular rule uses. The value of $n$ and $d$ is maintained during the whole experiment. In the experiments that we present $n=0, d=0.4$, but similar results can be obtained with other values. Once we have calculated the error for the simple rule in absolute value, we divide the interval $[a, c]$ by two and we repeat the process keeping the value of $n$ and $d$. The results are presented in Figure \ref{err}. We can see that in all the cases, the error of the corrected formulas decreases following the theoretical rate and the noncorrected formulas present an error that corresponds to the first term of the corrections presented in Table \ref{tabla_corr}, that is $O(h)$. To the left of Figure \ref{err}, we present the results for the simple trapezoid rule in blue and for the corrected simple trapezoid rule in red. The error for the noncorrected rule decreases as the dashed line in blue, which shows the division of the error by two each time that the mesh size is divided by two ($O(h)$ order of accuracy). The corrected trapezoid rule behaves very similarly to the dashed line in red, which divides the error by eight when the mesh side is divided by two ($O(h^3)$ order of accuracy). At the center, the error for the non corrected Simpson's $\frac{1}{3}$ decreases as the dashed line in blue, which represents $O(h)$ order of accuracy. The error for the corrected Simpson's $\frac{1}{3}$ is represented by the dashed line in red, which represents $O(h^4)$ order of accuracy. To the right, the error for the noncorrected Simpson' $\frac{3}{8}$ rule decreases with $O(h)$ order of accuracy, while the corrected one decreases with $O(h^4)$ order of accuracy. We can also observe the numerical results in table \ref{tablaerrores_simple}.

\begin{figure}
\begin{center}
\includegraphics[height=3.5cm]{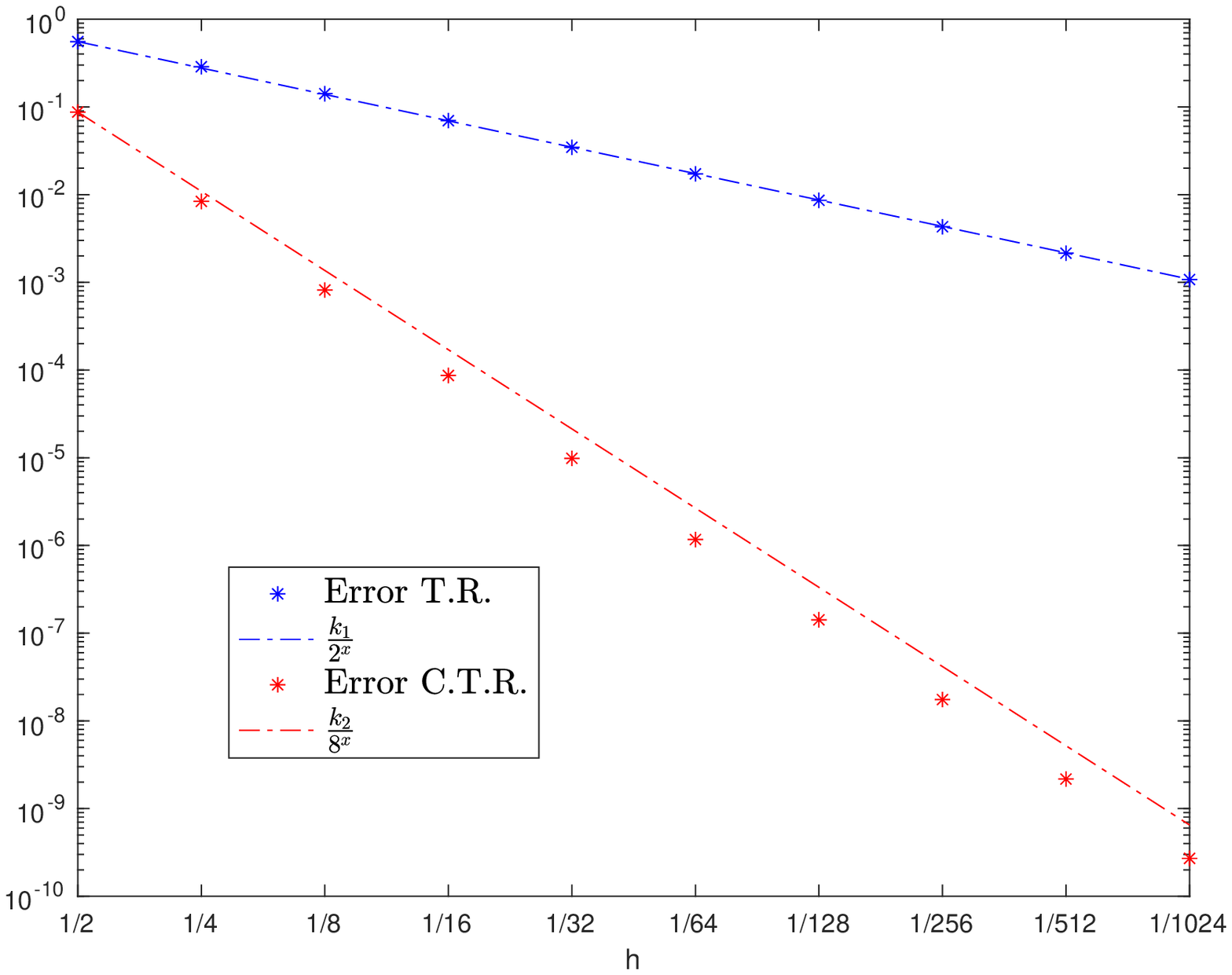}
\includegraphics[height=3.5cm]{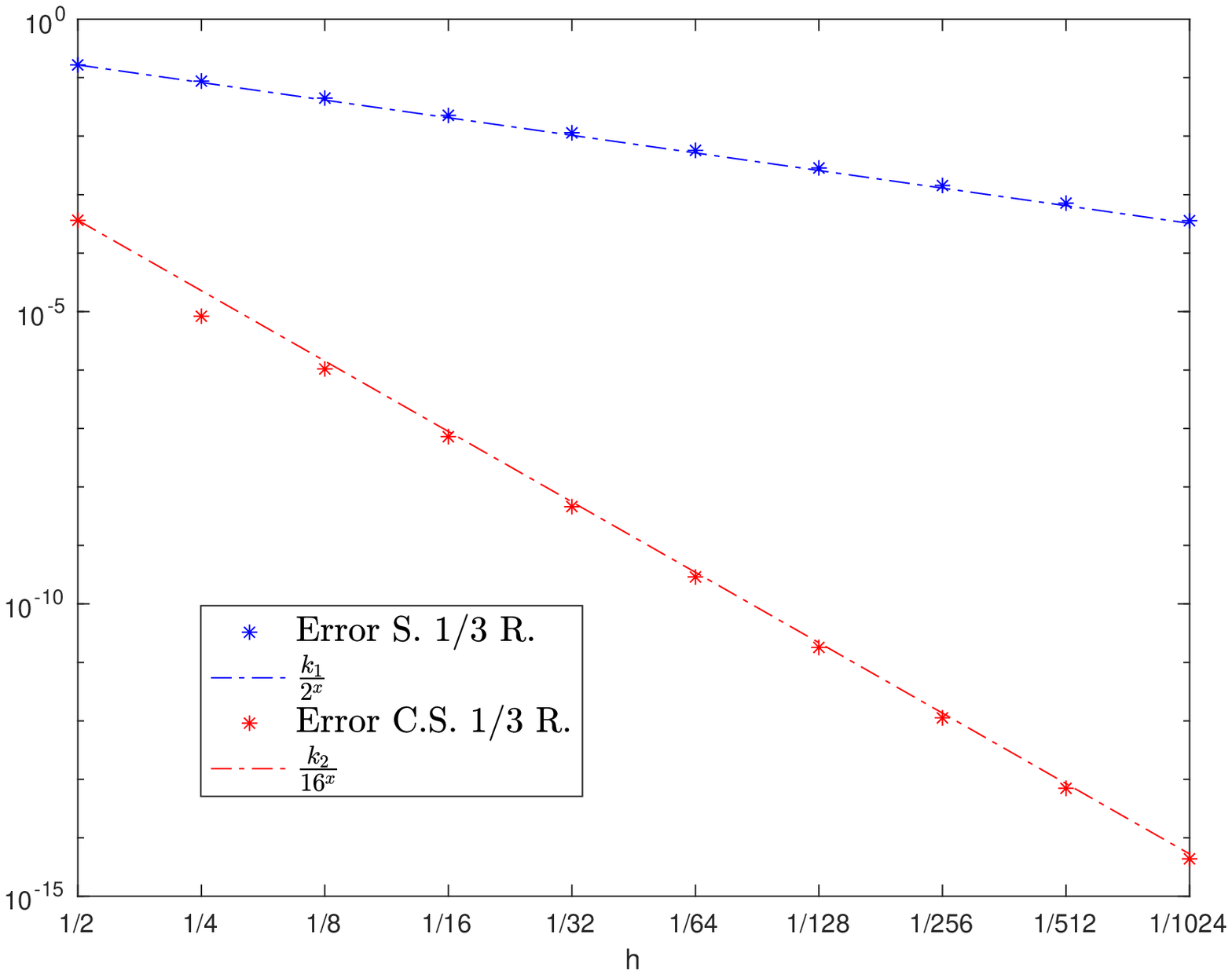}
\includegraphics[height=3.5cm]{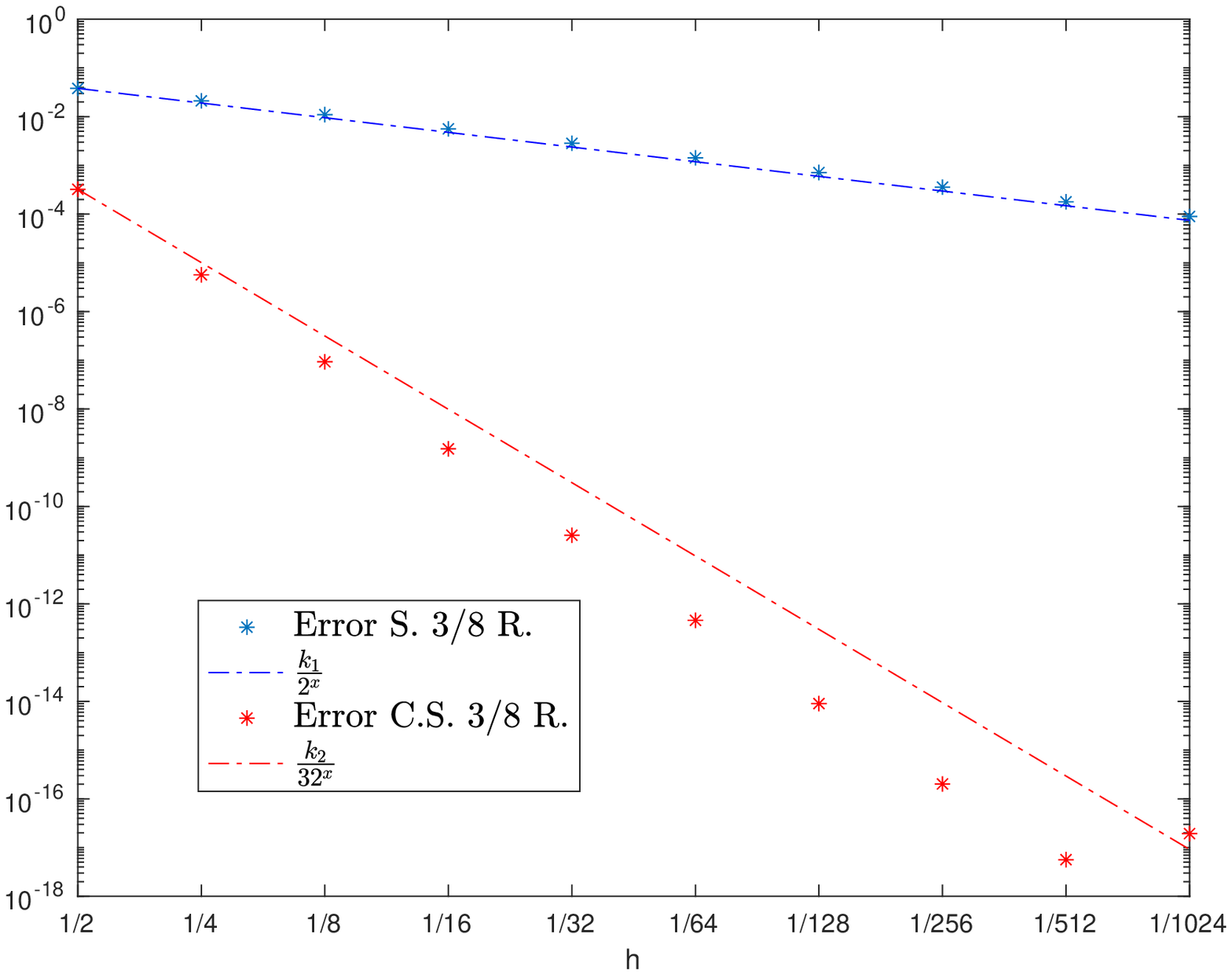}
\caption{Grid refinement analysis for the numerical integration of the function in (\ref{function}). To the left, using the simple trapezoid rule and the corrected simple trapezoid rule. At the center, using the simple Simpson's $1/3$ rule and the corrected one. To the right, using the simple Simpson's $3/8$ rule and the corrected one. In all the cases, the error of the corrected formulas decreases following the theoretical rate.}\label{err}
\end{center}
\end{figure}

\begin{figure}
\begin{center}
\includegraphics[height=3.5cm]{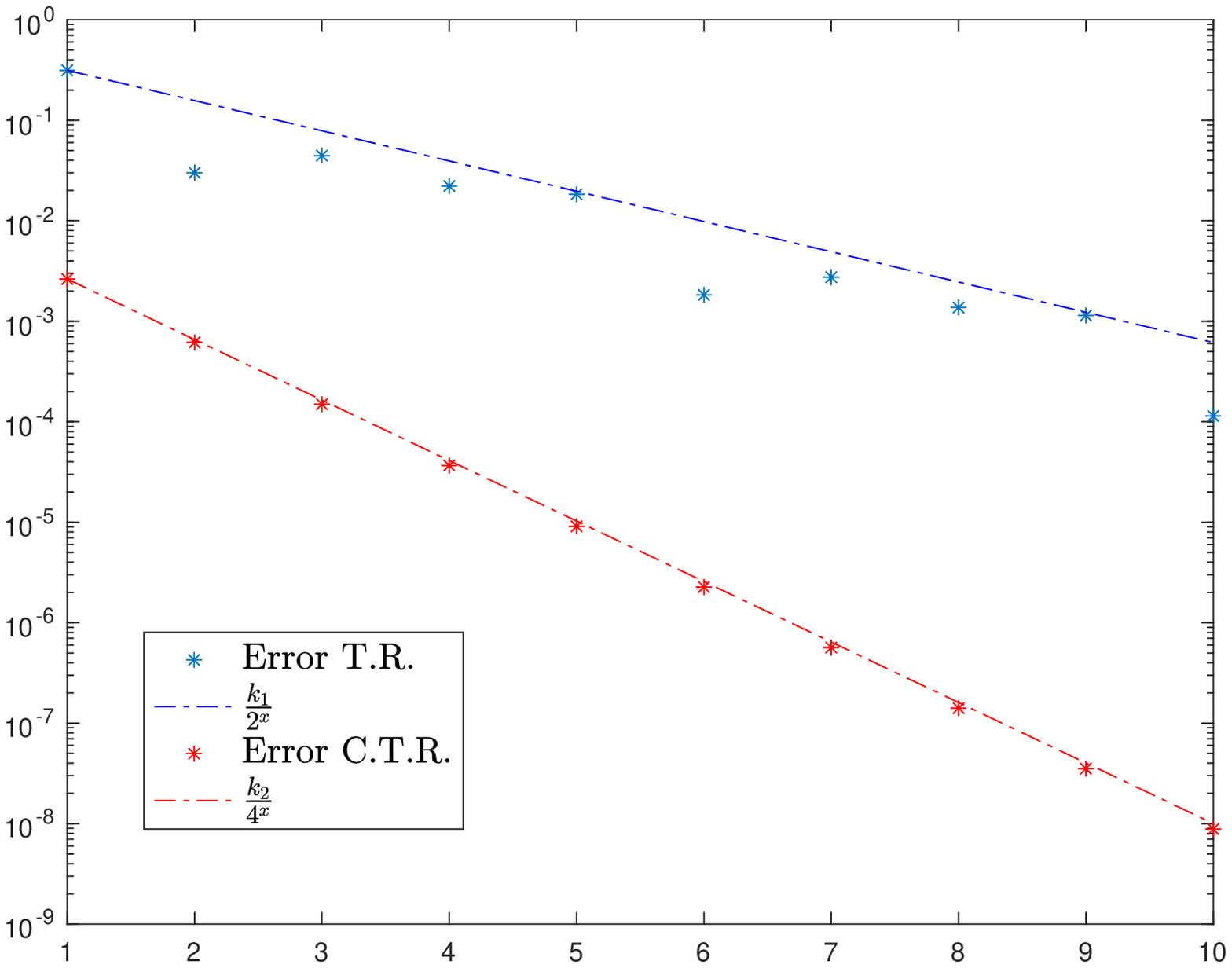}
\includegraphics[height=3.5cm]{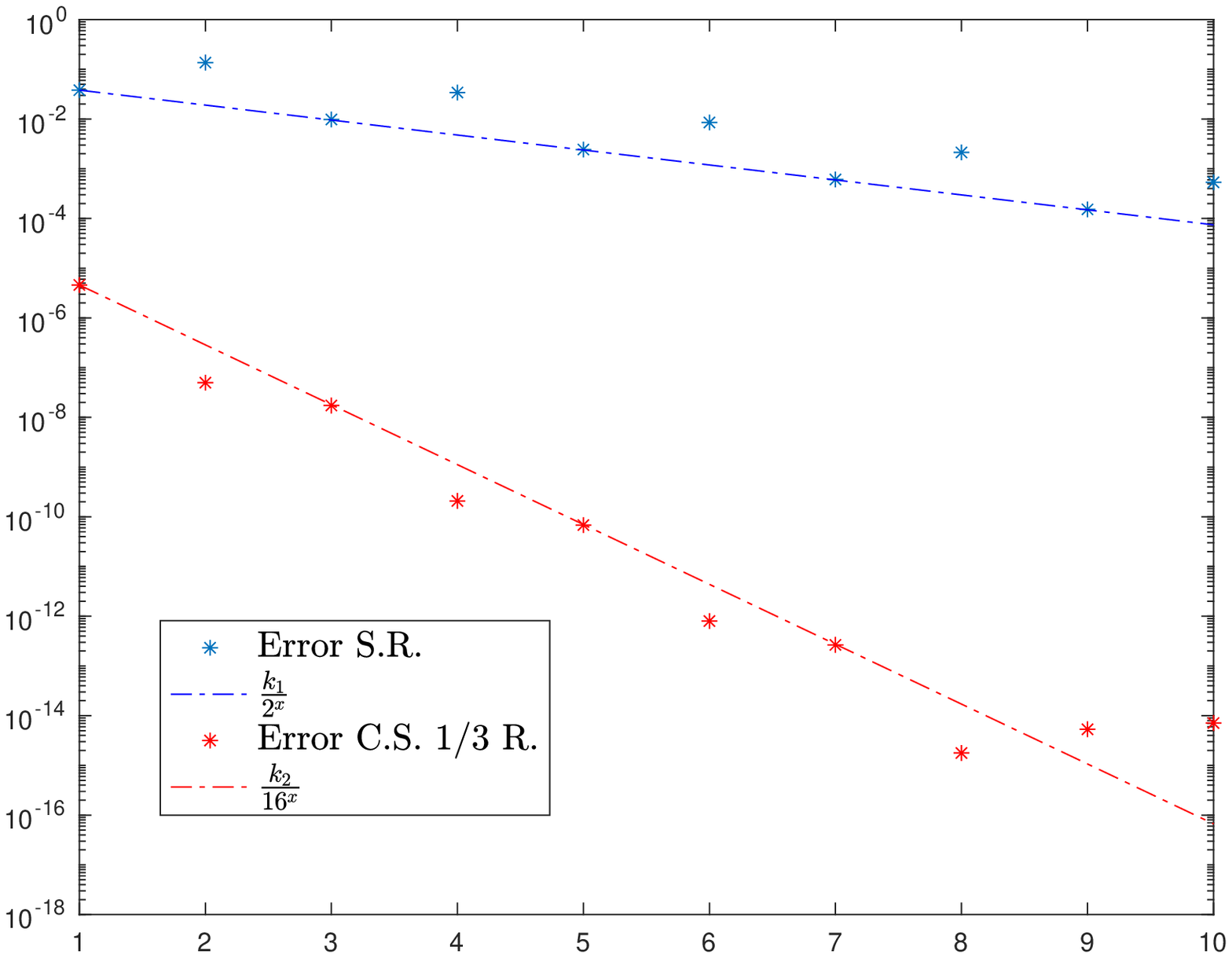}
\includegraphics[height=3.5cm]{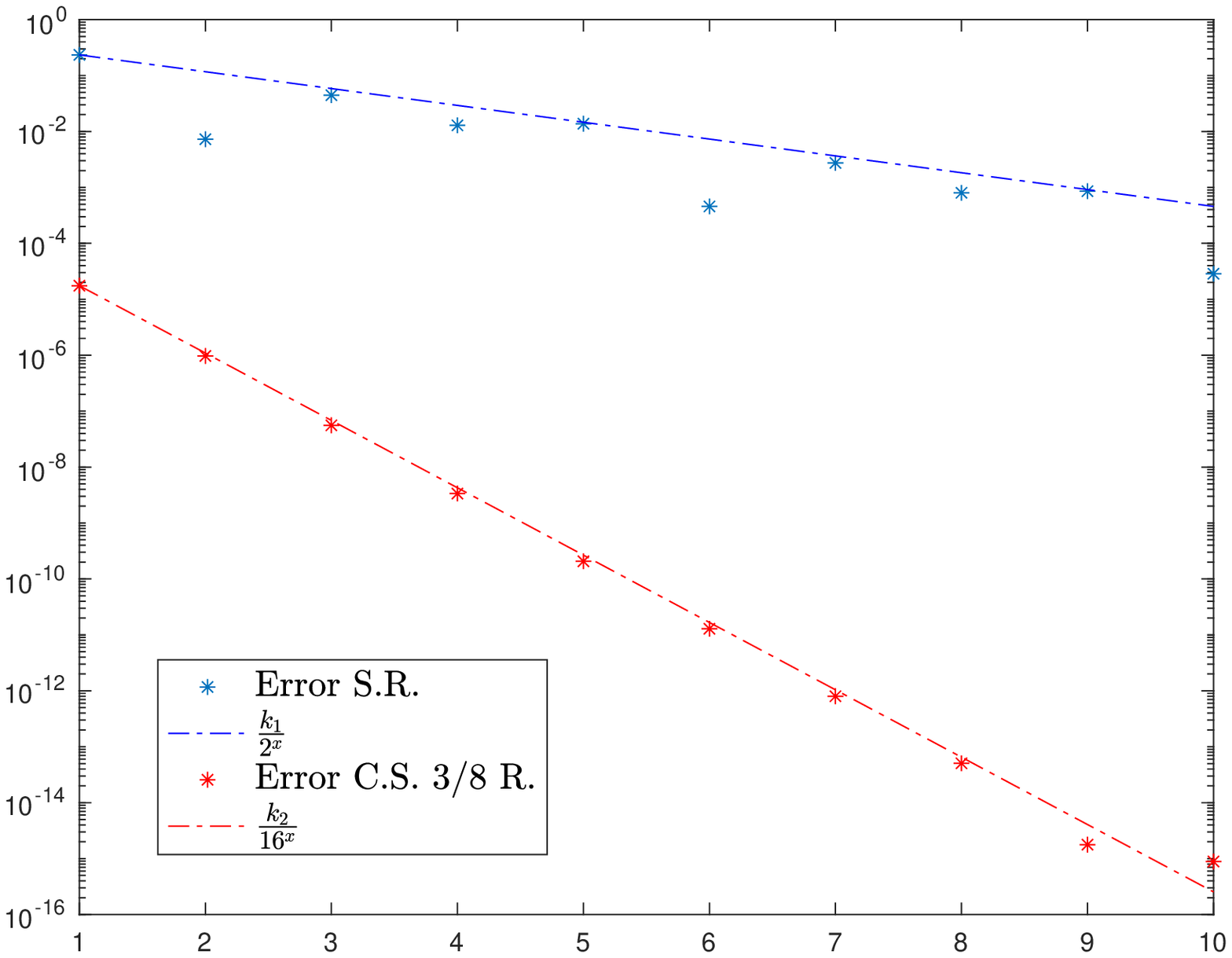}
\caption{Grid refinement analysis for the numerical integration of the function in (\ref{function}). To the left, using the composite trapezoid rule and the corrected composite trapezoid rule. At the center, using the composite Simpson's $1/3$ rule and the corrected one. To the right, using the composite Simpson's $3/8$ rule and the corrected one. In all the cases, the error of the corrected formulas decrease following the theoretical rate.}\label{plot_err}
\end{center}
\end{figure}

In Table \ref{tablaerrores} we present a second grid refinement experiment for the composite rules. 
In this case, we start from a point value discretization of the data with $n=2^i, i=3, 5, \ldots, 12$ points for the trapezoid and the Simpson's $\frac{3}{8}$ rule. For the Simpson's $\frac{1}{3}$ we set $n=2^i+1, i=3, 5, \ldots, 12$. The order presents some variability in the case of the Simpson's $\frac{1}{3}$, (as well as the order of the noncorrected rules). Even so, in Figure \ref{plot_err} we can observe that the decreasing of the errors presented in Table \ref{tablaerrores} coincides with the expected theoretical one, also represented in the graphs. In Figure \ref{plot_err} to the left, we present the results for the composite trapezoid rule in blue and for the corrected composite trapezoid rule in red. We can see that the noncorrected rule shows a decrease in the error very similar to the dashed line in blue, which shows the division of the error by two each time that the mesh size is divided by two ($O(h)$ order of accuracy). The corrected trapezoid rule behaves very similarly to the dashed line in red, which divides the error by four when the mesh side is divided by two ($O(h^2)$ order of accuracy). At the center of Figure \ref{plot_err}, the non corrected Simpson's $\frac{1}{3}$ rule behaves very similarly to the dashed line in blue, which represents $O(h)$ order of accuracy. The corrected Simpson's $\frac{1}{3}$ rule behaves very similarly to the dashed line in red, which represents $O(h^4)$ order of accuracy. Similar behavior can be observed for the Simpson' $\frac{3}{8}$ rule (at the right in Figure \ref{plot_err}): the noncorrected rule presents $O(h)$ order of accuracy, while the corrected one presents $O(h^4)$. We can see that the orders of accuracy of the corrected composite rules correspond to those of the classical composite rules at smooth zones.

\section{Conclusions}\label{conclusions}

In this article, we have presented correction terms for the classical trapezoid rule, Simpson's $\frac{1}{3}$ rule, and the most common Newton-Cotes integration formulas. These correction terms have an explicit closed formula that allows keeping the global accuracy attained by classical formulas at smooth zones even when the data contains discontinuities in the function or the derivatives. The correction terms can be used for the simple or composite classical integration formulas and it is possible to compute the integral using these formulas and then, as post-processing, add the correction terms to raise the accuracy. Correction terms for any other integration rule can be found following analogous processes to the ones shown in this work. We have also given correction terms for the most widely used Newton-Cotes quadrature formulas and we have proved that the use of these correction terms assures the expected theoretical accuracy. We have shown that the correction terms depend on the jumps of the function that is to be integrated and its derivatives. 
All the numerical experiments that we have presented, confirm the theoretical results obtained.%}

\begin{landscape}
\begin{table}[!ht]
\begin{center}
%Programas en: /Users/j/Documents/MATLAB/integracion_numerica_Fernando/trapecios_articulo.m
\resizebox{20cm}{!} {
\begin{tabular}{|c|c|c|c|}
%\hline $C_n$ & $\int_{x^*}^bQ^-(x)dx+\int_a^{x^*}Q^+(x)dx$ 
%      \\
\hline   $C_{1,1}$ & ${\frac{\left(-h+2\alpha \right)}{2}} [f]+{\frac { \left( h{\alpha}-\alpha^{2} \right) }{2}}[f']$
      \\       
\hline $C_{2,1}$ & $\left( \alpha-\frac{h}{3} \right) [f]+\frac{\alpha}{6} \left( 3
\alpha-2h \right) [f']+\frac{{\alpha}^{2}}{6} \left( \alpha-h
 \right) [f'']$%+\frac{{\alpha}^{2}}{36} \left( 3{\alpha}^{2}+6{h}^{2}-8h\alpha \right) [f''']\right)$   
    \\
\hline $C_{2,2}$ & $-\left( \alpha-\frac{h}{3} \right) [f]+\frac{\alpha}{6} \left( 3
\alpha-2h \right) [f']-\frac{{\alpha}^{2}}{6} \left( \alpha-h
 \right) [f'']$\\%+\frac{{\alpha}^{2}}{36} \left( 3{\alpha}^{2}+6{h}^{2}-8h\alpha \right) [f''']\right)$\\
\hline $C_{3,1}$ & 
$\left( \alpha-\frac{3}{8}\,h \right) [f]+ \left( \frac{3}{8}\,h\alpha-\frac{1}{2}\,{
\alpha}^{2} \right) [f']+ \left( -\frac{3}{16}\,h{\alpha}^{2}+\frac{1}{6}\,{
\alpha}^{3} \right) [f'']+ \left( \frac{1}{16}\,h{\alpha}^{3}-\frac{1}{24}\,{
\alpha}^{4} \right) [f''']$\\%+ \left( {\frac {3}{64}}\,h{\alpha}^{4}
%+\frac{1}{8}\,{h}^{3}{\alpha}^{2}-{\frac {11}{72}}\,{h}^{2}{\alpha}^{3} \right) [f^{(4)}]$\\
\hline $C_{3,2}$ & $\left( \alpha-\frac{1}{2}\,h \right) [f]+ \left( \frac{1}{2}\,h\alpha-\frac{1}{2}\,{
\alpha}^{2}-\frac{1}{8}\,{h}^{2} \right) [f']+ \left( -\frac{1}{4}\,h{\alpha}^{2}
-\frac{1}{48}\,{h}^{3}+\frac{1}{8}\,{h}^{2}\alpha+\frac{1}{6}\,{\alpha}^{3} \right) [f'']
+ \left( \frac{1}{48}\,{h}^{3}\alpha+\frac{1}{48}\,{h}^{4}-\frac{1}{24}\,{\alpha}^{4}-\frac{1}{16}\,{h
}^{2}{\alpha}^{2}+\frac{1}{12}\,h{\alpha}^{3} \right) [f''']$\\%+ \left( -\frac{1}{48}\,\alpha\,{h}^{4}+{\frac {5}{144}}\,{h}^{2}{\alpha}^{3}+{\frac {11}{
%576}}\,{h}^{5}-{\frac {5}{96}}\,{h}^{3}{\alpha}^{2} \right) [f^{(4)}]$\\
\hline $C_{3,3}$ & $\left( -\alpha+\frac{3}{8}\,h \right) [f]+ \left( -\frac{1}{2}\,{\alpha}^{2}+\frac{3}{8}
\,h\alpha \right) [f']+ \left( -\frac{1}{6}\,{\alpha}^{3}+\frac{3}{16}\,h{\alpha}
^{2} \right) [f'']+ \left( -\frac{1}{24}\,{\alpha}^{4}+\frac{1}{16}\,h{\alpha}^{3
} \right) [f''']$\\%+ \left( {\frac {11}{72}}\,{h}^{2}{\alpha}^{3}-1/
%8\,{h}^{3}{\alpha}^{2}-{\frac {3}{64}}\,h{\alpha}^{4} \right) [f^{(4)}]
%$	\\   
\hline $C_{4,1}$ & $\left( -{\frac {14}{45}}\,h+\alpha \right) [f]+ \left( -\frac{1}{2}\,{
\alpha}^{2}+{\frac {14}{45}}\,h\alpha \right) [f']+ \left( \frac{1}{6}\,{
\alpha}^{3}-{\frac {7}{45}}\,h{\alpha}^{2} \right) [f'']+ \left(
-\frac{1}{24}\,{\alpha}^{4}+{\frac {7}{135}}\,h{\alpha}^{3} \right) [f''']+ \left( {\frac {1}{120}}\,{\alpha}^{5}-{\frac {7}{540}}\,h{\alpha}^{
4} \right) [f^{(4)}]$\\%+ \left( -{\frac {1}{360}}\,{\alpha}^{6}-\frac{1}{10}\,
%{h}^{4}{\alpha}^{2}+{\frac {13}{675}}\,h{\alpha}^{5}+{\frac {5}{36}}\,
%{h}^{3}{\alpha}^{3}-{\frac {7}{96}}\,{h}^{2}{\alpha}^{4} \right) [f^{(5)}]$\\
\hline $C_{4,2}$ & $\left( \alpha-{\frac {11}{15}}\,h \right) [f]+ \left( -{\frac {
17}{90}}\,{h}^{2}+{\frac {11}{15}}\,h\alpha-\frac{1}{2}\,{\alpha}^{2} \right)
[f']+ \left( {\frac {1}{90}}\,{h}^{3}+{\frac {17}{90}}\,{h}^{2}
\alpha-{\frac {11}{30}}\,h{\alpha}^{2}+\frac{1}{6}\,{\alpha}^{3} \right) [f'']+ \left( {\frac {11}{90}}\,h{\alpha}^{3}-\frac{1}{24}\,{\alpha}^{4}+{
\frac {11}{1080}}\,{h}^{4}-{\frac {1}{90}}\,{h}^{3}\alpha-{\frac {17}{
180}}\,{h}^{2}{\alpha}^{2} \right) [f''']+ \left( {\frac {17}{540
}}\,{h}^{2}{\alpha}^{3}+{\frac {1}{180}}\,{h}^{3}{\alpha}^{2}-{\frac {
11}{360}}\,h{\alpha}^{4}-{\frac {11}{1080}}\,{h}^{4}\alpha-{\frac {1}{
216}}\,{h}^{5}+{\frac {1}{120}}\,{\alpha}^{5} \right) [f^{(4)}]$\\%+
% \left( -{\frac {379}{21600}}\,{h}^{6}-{\frac {79}{4320}}\,{h}^{2}{
%\alpha}^{4}+{\frac {1}{216}}\,{h}^{5}\alpha-{\frac {1}{360}}\,{\alpha}
%^{6}-{\frac {17}{1080}}\,{h}^{3}{\alpha}^{3}+{\frac {13}{900}}\,h{
%\alpha}^{5}+{\frac {13}{432}}\,{h}^{4}{\alpha}^{2} \right) [f^{(5)}]$\\
\hline $C_{4,3}$ & $\left( -\alpha+{\frac {11}{15}}\,h \right) [f]+ \left( -{\frac {
17}{90}}\,{h}^{2}+{\frac {11}{15}}\,h\alpha-\frac{1}{2}\,{\alpha}^{2} \right)
[f']+ \left( -\frac{1}{6}\,{\alpha}^{3}+{\frac {11}{30}}\,h{\alpha}^{2}-{
\frac {17}{90}}\,{h}^{2}\alpha-{\frac {1}{90}}\,{h}^{3} \right) [f'']+ \left( {\frac {11}{90}}\,h{\alpha}^{3}-\frac{1}{24}\,{\alpha}^{4}+{
\frac {11}{1080}}\,{h}^{4}-{\frac {1}{90}}\,{h}^{3}\alpha-{\frac {17}{
180}}\,{h}^{2}{\alpha}^{2} \right) [f''']+ \left( -{\frac {17}{
540}}\,{h}^{2}{\alpha}^{3}+{\frac {11}{1080}}\,{h}^{4}\alpha+{\frac {
11}{360}}\,h{\alpha}^{4}-{\frac {1}{180}}\,{h}^{3}{\alpha}^{2}+{\frac
{1}{216}}\,{h}^{5}-{\frac {1}{120}}\,{\alpha}^{5} \right) [f^{(4)}]$\\
%+ \left( -{\frac {379}{21600}}\,{h}^{6}-{\frac {79}{4320}}\,{h}^{2}{
%\alpha}^{4}+{\frac {1}{216}}\,{h}^{5}\alpha-{\frac {1}{360}}\,{\alpha}
%^{6}-{\frac {17}{1080}}\,{h}^{3}{\alpha}^{3}+{\frac {13}{900}}\,h{
%\alpha}^{5}+{\frac {13}{432}}\,{h}^{4}{\alpha}^{2} \right) [f^{(5)}]$\\
\hline $C_{4,4}$ & 	 $\left( {\frac {14}{45}}\,h-\alpha \right) [f]+ \left( -\frac{1}{2}\,{
\alpha}^{2}+{\frac {14}{45}}\,h\alpha \right) [f']+ \left( -\frac{1}{6}\,
{\alpha}^{3}+{\frac {7}{45}}\,h{\alpha}^{2} \right) [f'']+
 \left( -\frac{1}{24}\,{\alpha}^{4}+{\frac {7}{135}}\,h{\alpha}^{3} \right) 
[f''']+ \left( -{\frac {1}{120}}\,{\alpha}^{5}+{\frac {7}{540}}\,h
{\alpha}^{4} \right) [f^{(4)}]$\\%+ \left( -{\frac {1}{360}}\,{\alpha}^
%{6}-\frac{1}{10}\,{h}^{4}{\alpha}^{2}+{\frac {13}{675}}\,h{\alpha}^{5}+{\frac
%{5}{36}}\,{h}^{3}{\alpha}^{3}-{\frac {7}{96}}\,{h}^{2}{\alpha}^{4}
% \right) [f^{(5)}]$\\ 
\hline     
\end{tabular}
}
\caption{Correction terms to be subtracted from the most common integration formulas.}\label{tabla_corr}
\end{center}
\end{table}

\begin{table}[!ht]
\begin{center}
\resizebox{12cm}{!} {
\begin{tabular}{|c|c|c|c|c|c|c|c|c|c|c|c|c|}
\hline $(c-a)$ & $\frac{1}{2}$  & $\frac{1}{4}$      & $\frac{1}{8}$     & $\frac{1}{16}$    & $\frac{1}{32}$ & $\frac{1}{64}$  & $\frac{1}{128}$ &$\frac{1}{256}$ &$\frac{1}{512}$&$\frac{1}{1024}$
      \\       
\hline
Error T.R. ($E_i$)& 5.55384e-01 & 2.87374e-01 & 1.41312e-01 & 6.97373e-02 & 3.46224e-02 & 1.72492e-02 & 8.60914e-03 & 4.30072e-03 & 2.14940e-03 & 1.07446e-03 
      \\       
\hline
$O_i$ &-& 0.95056 & 1.024 & 1.0189 & 1.0102 & 1.0052 & 1.0026 & 1.0013 & 1.0006 & 1.0003
    \\
\hline     
Error C.T.R. ($E_i$)& 8.73231e-02 & 8.36720e-03 & 8.18010e-04 & 8.67118e-05 & 9.84247e-06 & 1.16748e-06 & 1.41995e-07 & 1.75028e-08 & 2.17243e-09 & 2.70590e-10
      \\       
\hline     
$O_i$& -& 3.3835 & 3.3546 & 3.2378 & 3.1391 & 3.0756 & 3.0395 & 3.0202 & 3.0102 & 3.0051
\\
\hline
\hline
Error S. 1/3 R. ($E_i$)& 1.65392e-01 & 8.75298e-02 & 4.48262e-02 & 2.26680e-02 & 1.13966e-02 & 5.71378e-03 & 2.86074e-03 & 1.43133e-03 & 7.15906e-04 & 3.58013e-04
      \\       
\hline
$O_i$&-& 0.91804 & 0.96543 & 0.98368 & 0.99206 & 0.99608 & 0.99806 & 0.99903 & 0.99952 & 0.99976
    \\
\hline     
Error C.S. 1/3 R. ($E_i$)& 3.64440e-04 & 8.31101e-06 & 1.04180e-06 & 7.25089e-08 & 4.61560e-09 & 2.88760e-10 & 1.80198e-11 & 1.12481e-12 & 7.02849e-14 & 4.37150e-15
      \\       
\hline     
$O_i$& -& 5.4545 & 2.9959 & 3.8448 & 3.9736 & 3.9986 & 4.0022 & 4.0018 & 4.0003 & 4.007 
\\
\hline
\hline
Error S. 3/8 R. ($E_i$)& 3.80713e-02 & 2.11065e-02 & 1.10144e-02 & 5.61914e-03 & 2.83719e-03 & 1.42546e-03 & 7.14440e-04 & 3.57647e-04 & 1.78930e-04 & 8.94916e-05
      \\       
\hline
$O_i$&-& 0.85102 & 0.9383 & 0.97097 & 0.98588 & 0.99304 & 0.99654 & 0.99828 & 0.99914 & 0.99957
    \\
\hline     
Error C.S. 3/8 R. ($E_i$)& 3.23113e-04 & 5.67780e-06 & 9.29742e-08 & 1.52212e-09 & 2.56990e-11 & 4.60409e-13 & 9.00668e-15 & 2.02095e-16 & 5.63785e-18 & 1.92988e-17
      \\       
\hline     
$O_i$& -& 5.8306 & 5.9324 & 5.9327 & 5.8882 & 5.8026 & 5.6758 & 5.4779 & 5.1637 & -1.7753
      \\       
\hline     
\end{tabular}
}
\caption{{Grid refinement analysis for the simple quadrature rules. The first part of the table shows the trapezoidal rule (T.R.) and the corrected trapezoidal rule (C.T.R.). The central part shows the Simpson's 1/3 Rule (S. 1/3 R.) and the corrected Simpson's rule (C.S. 1/3 R.). Finally the bottom part shows the Simpson's 3/8 Rule (S. 3/8 R.) and the corrected Simpson's 3/8 rule (C.S. 3/8 R.).  We have used the function in (\ref{function}).} 
}\label{tablaerrores_simple}
\end{center}
\end{table}
\begin{table}[!ht]
\begin{center}
\resizebox{12cm}{!} {
\begin{tabular}{|c|c|c|c|c|c|c|c|c|c|c|c|c|c|c|c|c|c|c|c|c|c|c|c|c|c|c|}
\hline $n=2^i$ & $2^4$  & $2^5$      & $2^6$     & $2^7$    & $2^8$ & $2^9$  & $2^{10}$ &$2^{11}$ &$2^{12}$&$2^{13}$%&$2^{14}$&$2^{15}$
      \\       
\hline
Error T.R. ($E_i$)& 3.14564e-01 & 3.00647e-02 & 4.44922e-02 & 2.21224e-02 & 1.83580e-02 & 1.83086e-03 & 2.74401e-03 & 1.37153e-03 & 1.14264e-03 & 1.14245e-04%5.80253e-02 & 8.79979e-02 & 1.46502e-02 & 2.19159e-02 & 3.65367e-03 & 5.48423e-03 & 9.13975e-04 & 1.37073e-03 & 2.28459e-04 & 3.42703e-04 %& 5.71169e-05 & 8.56744e-05
      \\       
\hline 
$O_i$ & -& 3.3872 & -0.56548 & 1.008 & 0.2691 & 3.3258 & -0.58377 & 1.0005 & 0.26341 & 3.3222%-0.60079 & 2.5865 & -0.58106 & 2.5846 & -0.58594 & 2.5851 & -0.58472 & 2.5849 & -0.58502 %& 2.585 & -0.58495
    \\
\hline     
Error C.T.R. ($E_i$)& 2.63164e-03 & 6.16553e-04 & 1.49283e-04 & 3.66601e-05 & 9.09938e-06 & 2.26614e-06 & 5.65427e-07 & 1.41201e-07 & 3.52844e-08 & 8.81900e-09%2.31752e-03 & 5.76324e-04 & 1.44358e-04 & 3.61380e-05 & 9.02992e-06 & 2.25673e-06 & 5.64252e-07 & 1.41075e-07 & 3.52676e-08 & 8.81670e-09 %& 2.20418e-09 & 5.51053e-10
      \\       
\hline     
$O_i$& -& 2.0937 & 2.0462 & 2.0258 & 2.0104 & 2.0055 & 2.0028 & 2.0016 & 2.0006 & 2.0003 %2.0076 & 1.9972 & 1.9981 & 2.0007 & 2.0005 & 1.9998 & 1.9999 & 2 & 2% & 2 & 2
\\
\hline
\hline
Error S. 3/8 R. ($E_i$)& 2.33932e-01 & 7.25343e-03 & 4.45169e-02 & 1.28813e-02 & 1.37617e-02 & 4.56764e-04 & 2.74410e-03 & 7.99969e-04 & 8.56956e-04 & 2.85576e-05
      \\       
\hline
$O_i$& -&5.0113 & -2.6176 & 1.7891 & -0.095376 & 4.9131 & -2.5868 & 1.7783 & -0.099277 & 4.9073
    \\
\hline     
Error C.S. 3/8 R. ($E_i$)& 1.74854e-05 & 9.70184e-07 & 5.58112e-08 & 3.36499e-09 & 2.07176e-10 & 1.28564e-11 & 8.00249e-13 & 5.06262e-14 & 1.77636e-15 & 8.88178e-16
      \\       
\hline     
$O_i$& -& 4.1717 & 4.1196 & 4.0519 & 4.0217 & 4.0103 & 4.0059 & 3.9825 & 4.8329 & 1
\\
\hline
\hline $n=2^i+1$ & $2^4+1$  & $2^5+1$      & $2^6+1$     & $2^7+1$    & $2^8+1$ & $2^9+1$  & $2^{10}+1$ &$2^{11}+1$ &$2^{12}+1$&$2^{13}+1$%&$2^{14}+1$&$2^{15}+1$
\\
\hline
Error S. 1/3 R. ($E_i$)& 3.81374e-02 & 1.36672e-01 & 9.79905e-03 & 3.41046e-02 & 2.43374e-03 & 8.53020e-03 & 6.09443e-04 & 2.13230e-03 & 1.52298e-04 & 5.33090e-04
      \\       
\hline
$O_i$& -& -1.8414 & 3.8019 & -1.7993 & 3.8087 & -1.8094 & 3.807 & -1.8068 & 3.8074 & -1.8075
    \\
\hline     
Error C.S. 1/3 R. ($E_i$)& 4.59121e-06 & 4.99040e-08 & 1.72994e-08 & 2.08413e-10 & 6.81162e-11 & 8.00249e-13 & 2.65565e-13 & 1.77636e-15 & 5.32907e-15 & 7.10543e-15
      \\       
\hline     
$O_i$& -& 6.5236 & 1.5284 & 6.3751 & 1.6134 & 6.4114 & 1.5914 & 7.224 & -1.585 & -0.41504
      \\       
\hline     
\end{tabular}
}
\caption{{Grid refinement analysis for the composite quadrature rules. The first part of the table shows the trapezoidal rule (T.R.) and the corrected trapezoidal rule (C.T.R.). The central part shows the Simpson's 1/3 Rule (S. 1/3 R.) and the corrected Simpson's rule (C.S. 1/3 R.). Finally the bottom part shows the Simpson's 3/8 Rule (S. 3/8 R.) and the corrected Simpson's 3/8 rule (C.S. 3/8 R.). We have used the function in (\ref{function}).} 
}\label{tablaerrores}
\end{center}
\end{table}
\end{landscape}

\bibliography{bibliografia_bibdesk_no_doi_url}

%\end{thebibliography}

\end{document}